\documentclass[12pt]{article}
\usepackage{amssymb}
\usepackage{color}
\usepackage{hyperref}
\usepackage{amsmath}
\usepackage{amsfonts}
\usepackage{graphicx}
\usepackage{amsthm}
\usepackage{float}
\usepackage{xspace}
\usepackage{stmaryrd}

\newcommand{\RR}{\mathbb{R}}

\newcommand{\x}[1]{\operatorname{#1}}
\newcommand{\dom}{\x{D}}
\newcommand{\spec}{\x{Spec}}
\renewcommand{\Re}{\x{Re}}
\renewcommand{\Im}{\x{Im}}
\renewcommand{\b}[1]{\underline{#1}}
\renewcommand{\frak}[1]{\mathfrak{#1}}
\renewcommand{\ker}{\operatorname{Ker}}
\newcommand{\dx}{\mathrm{d}x}
\newcommand{\pp}{p}
\newcommand{\qq}{q}

\renewcommand{\geq}{\geqslant}
\renewcommand{\leq}{\leqslant}

\newtheorem{thm}{Theorem}[section]
\newtheorem{lem}[thm]{Lemma}

\newtheorem{col}[thm]{Corollary}

\title{On the convergence of the quadratic method}
\author{Lyonell Boulton \and Aatef Hobiny}
\date{4th August 2014}

\begin{document}

\maketitle

\abstract{The convergence of the so-called quadratic method for computing eigenvalue enclosures of general self-adjoint operators is examined. Explicit asymptotic bounds for convergence to isolated eigenvalues are found. These bounds turn out to improve significantly upon those determined in previous investigations. The theory is illustrated by means of several numerical experiments performed on particularly simple benchmark models of one-dimensional Schr{\"o}dinger operators.}

\tableofcontents

\newpage

\section{Introduction}
The Galerkin method is widely regarded as one of the best techniques for determining numerical one-side bounds for the eigenvalues of semi-definite operators. Under fairly general conditions, it leads to what is often called optimal order of convergence \cite{1983Chatelin}. The computation of complementary bounds for eigenvalues in the context of the projection methods is a more subtle task. Although various techniques are currently available, and historically they have existed for a long time (see e.g. \cite{1949Kato} and \cite[Chapter~4]{1974Weinberger}), only a few of them are as robust as the Galerkin method. The quadratic method, which relies on calculation of the  second order (relative) spectrum, is one of the few methods in this group which is capable of providing certified \emph{a priori} intervals of spectral enclosure. 

 Second order relative spectra were first considered by Davies \cite{1998Davies} in the context of resonances for general self-adjoint operators. It was then suggested by Shargorodsky \cite{Shargorodsky} and subsequently by Levitin and Shargorodsky \cite{Levitin}, that second order spectra could also be employed for the pollution-free computation of eigenvalues in gaps of  the essential spectrum. Based on these observations, convergence and spectral exactness was subsequently examined in \cite{200666Boulton, 20077Boulton, 20066Boulton, 2010Boulton}.  
 
 Various implementations, including on models from elasticity \cite{Levitin}, solid state physics \cite{2006Boulton}, relativistic quantum mechanics \cite{2008Boulton} and magnetohydrodynamics \cite{2010strauss}, confirm that the quadratic method is a reliable tool for eigenvalue approximation in the spectral pollution regime.  The goal of this paper is to continue with this programme of examining the general properties of the quadratic method and its potential use in the Mathematical Physics.

Section~\ref{quasec} is devoted to the basic setting of second order spectra, and how to determine from them upper and lower bounds for eigenvalues. Theorem~\ref{thmshar} below includes a short proof of Shargorodsky's Corollary \cite[Corollary~3.4]{Shargorodsky} in the general unbounded setting. 

In Section \ref{convergencesec} we address the spectral exactness of the quadratic method for general self-adjoint operators.  Our main contribution is Corollary~\ref{convquadgeneral}, which improves upon \cite[Theorem~3.4]{2010Boulton} in two crucial aspects. We consider a weaker hypothesis which includes approximation in the form sense rather than in the operator sense. This broadens the scope of applicability of the exactness result and it allows a sharpening of the precise exponent in the ratio of convergence of second order spectra to the spectrum (see Corollary~\ref{cor62}).

The theoretical framework of sections~\ref{quasec} and \ref{convergencesec} is completely general in character. In Section~\ref{schrosec} however, we have chosen applications to semi-bounded Schr{\"o}dinger operators in one dimension with potential singular at infinity. This allows us to illustrate our findings on the simplest possible model. Moreover, it highlights the effective use of the quadratic method for computation of complementary bounds for eigenvalues of semi-definite operators with compact resolvent. The latter is certainly a new possible application of the method which might be worth exploring in further detail.

 Section~\ref{numerical} contains specific numerical calculations on two exptremely well-known models: the harmonic and the anharmonic oscillators.  For these experiments the trial spaces are constructed via Hermite finite elements of order 3 and exact integrations. In an Appendix, we include all the explicit expressions involved in the assembly of the mass, stiffness and bending matrices, needed for computation of the second order spectra.

\subsubsection*{Notation}

Below $\mathcal{H}$ denotes a generic separable Hilbert space with inner product $\langle \cdot, \cdot \rangle$ and norm $\left\|\cdot\right\|$. Let the operator $A:\dom (A) \longrightarrow \mathcal{H}$ be self-adjoint. We will write $\spec(A)$ to denote the spectrum of $A$. 

For $u,\,v\in \dom(A)$ and $t\in\mathbb{R}$, let
\begin{gather*}
     \frak{a}^0_t(u,v)=\langle u,v \rangle \qquad \frak{a}^1_t(u,v)=\langle (A-t) u,v \rangle \\
      \frak{a}^2_t(u,v)=\langle (A-t)u,(A-t)v \rangle.
\end{gather*}
Whenever $t=0$ we will suppress the sub-index and write
\[\frak{a}^j(u,v)=\frak{a}^j_0(u,v).\] 

Given subspaces $\mathcal{L} \subset \dom(A) $ of dimension $n$ such that\footnote{Here and elsewhere below $\{b_j\}^n_{j=1}$ is a basis for $\mathcal{L}$.}
\[
\mathcal{L}=\x{Span} \{b_j\}^n_{j=1},
\]
 we will write 
\[
\mathbf{A}_{l}=\left[ \frak{a}^l (b_j,b_k)\right]^n_{jk=1} \in \mathbb{C}^{n \times n}.
\]
Without further mention, here and below we identify $v \in \mathcal{L}$  with $\b{v} \in \mathbb{C}^n$ 
by means of 
\[
  v=\sum_{j=1}^n \hat{v}(j)b_j \qquad \iff \qquad  \b{v}=\begin{bmatrix}\hat{v}(1)\\ \vdots \\ \hat{v}(n)\end{bmatrix}.
\]


\section{The quadratic method} \label{quasec}

We begin by describing the basic framework of the so called second order relative spectrum associated to
a self-adjoint operator, first considered by Davies in \cite{1998Davies}. We provide a short proof of Shargorodsky's Corollary \cite[Corollary~3.4]{Shargorodsky} in the unbounded setting, see \cite{Levitin} and Theorem~\ref{thmshar} below. We then re-examine mapping properties of the 
second order spectrum, as described in \cite{2010Boulton}. The proof of most statements in this section can be found scattered around the references \cite{1998Davies,Shargorodsky,Levitin,2010Boulton}. For the benefit of the reader, we include here a self-contained presentation.

The \emph{second order spectrum of $A$ (relative to the subspace $\mathcal{L}$)} is, by definition, the set
\[
\spec_2(A,\mathcal{L})\!=\!\{z\in \mathbb{C} :\exists u\in \mathcal{L}\setminus \{0\},\langle(A-zI)u,(A-\bar{z}I)v\rangle=0\ \forall v\in\mathcal{L}\}.
\]
That is, a complex number $z \in \spec_2(A,\mathcal{L})$, if and only if there exists a non-zero $u \in \mathcal{L}$ such that 
\begin{equation} \label{eq21}
 \frak{a}^2(u,v) -2z \frak{a}^1(u,v) +z^2 \frak{a}^0(u,v)=0 \qquad \forall v \in \mathcal{L}.
\end{equation}

For a basis  $\{b_j\}_{j=1}^n$ of $\mathcal{L}$ not necessarily orthonormal,  the weakly formulated problem \eqref{eq21} can be solved in matrix form via the quadratic matrix polynomial
\begin{equation} \label{eq22}
 Q(z)=\mathbf{A}_2-2z \mathbf{A}_1 +z^2 \mathbf{A}_0.
\end{equation}
Indeed $z\in \spec_2(A,\mathcal{L})$ if and only if $0=\det Q(z)$. 
The latter is a polynomial in $z$ of order $2n$. Since $\mathbf{A}_l$ are all hermitean,  $Q(z)^*=Q(\overline{z})$. Hence $\det Q(z)$ has all its coefficients real. Thus $\spec_2(A,\mathcal{L})$ is a set comprising at most $n$ different conjugate pairs\footnote{This includes the possibility of real numbers.}. See for example the figures~\ref{fig5_1} and \ref{fig5_2} below.

The eigenvalues of the matrix polynomial \eqref{eq22} can be determined from one of its companion matrices. For example, note that 
\begin{equation}   \label{comp_form}
 \det Q(z)=0 \qquad \iff \qquad \det (C-zD)=0
\end{equation}
for
\[
C= \begin{bmatrix}
0 &  I\\
-\mathbf{A}_2 & 2\mathbf{A}_1
 \end{bmatrix} \qquad \text{and} \qquad D= 
 \begin{bmatrix}
I &  0\\
0 & \mathbf{A}_0
 \end{bmatrix}.
\]
Indeed the assertion that $Q(z)$ is singular, is equivalent to the  existence of $\underline{u}\neq 0$ such that 
\[
\mathbf{A}_2\underline{u}-2z\mathbf{A}_1\underline{u}+z^2\mathbf{A}_0\underline{u}=0.
\]
Denoting $\underline{v}=z\underline{u}$, this can be re-written as \[\mathbf{A}_2\underline{u}-2\mathbf{A}_1\underline{v}+z\mathbf{A}_0\underline{v}=0.\]
In turns, the latter is equivalent to
\begin{equation}    \label{eq27}
  \begin{bmatrix}
0 &  I\\
-\mathbf{A}_2 & 2\mathbf{A}_1
 \end{bmatrix}
\begin{bmatrix}
\underline{u} \\
\underline{v}
 \end{bmatrix} 
 =z
\begin{bmatrix}
I &  0\\
0 & \mathbf{A}_0
 \end{bmatrix} 
\begin{bmatrix}
\underline{u} \\
\underline{v}
 \end{bmatrix},
\end{equation}
as needed for the verification of \eqref{comp_form}.

The different conjugate pairs belonging to $\x{Spec}_2(A,\mathcal{L})$ provide 
information about different components of $\x{Spec}(A)$. In order to see how these two sets are related, we consider the following \emph{upper approximation of the distance from any point}  $z\in \mathbb{C}$ \emph{to} $\spec(A)$, see \cite{1998Davies}. 
Let $F:\mathbb{C}\longrightarrow [0,\infty)$ be given by 
\begin{equation} \label{eq23}
 F(z)=\min_{0\neq v \in \mathcal{L}} \frac{\|(z-A)v \|}{\|v\|}.
\end{equation}
Then $F(z)$ is an upper bound for the \emph{Hausdorff distance} from $z$ to the spectrum of $A$,
\[
    \x{dist}[z,\spec(A)]=\min \{|z-\lambda|:\lambda\in \spec(A) \}.
\]
For completeness we include a detailed proof of this well-known assertion.

\begin{lem} \label{lem21}
 For any $z\in \mathbb{C}$, 
\begin{equation} \label{eq24}
 F(z) \geqslant \x{dist}\left[z,\spec(A)\right]. 
\end{equation}
\end{lem}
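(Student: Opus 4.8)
The plan is to use the spectral theorem for the self-adjoint operator $A$ to control the resolvent norm, and then to combine this with the variational characterisation of $F(z)$. For a fixed $z \in \mathbb{C}$, set $d = \x{dist}[z,\spec(A)]$. If $d = 0$ there is nothing to prove since $F(z) \geqslant 0$ always, so I would assume $d > 0$, which in particular means $z \notin \spec(A)$ and the resolvent $(z-A)^{-1}$ exists as a bounded operator on $\mathcal{H}$.

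The key quantitative step is the bound $\|(z-A)^{-1}\| = \x{dist}[z,\spec(A)]^{-1} = 1/d$, which is a standard consequence of the spectral theorem (the spectral mapping theorem applied to the function $\lambda \mapsto (z-\lambda)^{-1}$ gives $\spec((z-A)^{-1}) = \{(z-\lambda)^{-1} : \lambda \in \spec(A)\}$, and the norm of a normal operator equals its spectral radius). From this, for any $0 \neq v \in \mathcal{L} \subset \dom(A)$, write $w = (z-A)v \in \mathcal{H}$, so that $v = (z-A)^{-1}w$ and hence
\[
\|v\| = \|(z-A)^{-1}w\| \leqslant \|(z-A)^{-1}\|\,\|w\| = \frac{1}{d}\,\|(z-A)v\|.
\]
Rearranging gives $\|(z-A)v\|/\|v\| \geqslant d$ for every nonzero $v \in \mathcal{L}$, and taking the minimum over such $v$ yields $F(z) \geqslant d = \x{dist}[z,\spec(A)]$, which is exactly \eqref{eq24}.

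I do not anticipate a genuine obstacle here; the only point requiring a little care is the justification of $\|(z-A)^{-1}\| = 1/\x{dist}[z,\spec(A)]$ for an unbounded self-adjoint $A$, which is where self-adjointness (rather than mere symmetry) is essential — one should invoke the functional calculus on the spectral measure $E$ of $A$, noting that $\|(z-A)^{-1}\|^2 = \sup_{\lambda \in \spec(A)} |z-\lambda|^{-2} = (\inf_{\lambda \in \spec(A)} |z-\lambda|)^{-2}$. One should also record that the minimum in the definition of $F(z)$ is attained (the unit sphere of the finite-dimensional space $\mathcal{L}$ is compact and $v \mapsto \|(z-A)v\|$ is continuous), so that $F(z)$ is well-defined as written, though for the inequality alone an infimum would suffice.
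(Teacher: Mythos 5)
Your proposal is correct and follows essentially the same route as the paper: both reduce the claim to the resolvent bound $\|(z-A)^{-1}\| = \x{dist}[z,\spec(A)]^{-1}$ for self-adjoint $A$ (which the paper simply cites from Davies' book) and then use $\mathcal{L}\subset\dom(A)$ to pass from the resolvent estimate to the minimum defining $F(z)$. The only difference is cosmetic: you apply the resolvent bound pointwise to each $v\in\mathcal{L}$, while the paper phrases the same step as an infimum--supremum duality over $\dom(A)$.
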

\begin{proof}
If $z\in \spec(A)$ there is nothing to prove as $F(z)\geq 0$. Assume that $z\not \in \spec(A)$.
By the fact that $\mathcal{L}\subset \dom(A)$, it follows directly that 
\begin{align*}
 F(z)&\geq\inf_{0\not=v \in \dom (A)} \frac{\|(z-A)v\|}{\|v\|}=
\left(\sup_{0\not=u \in \mathcal{H}} \frac{\|(z-A)^{-1}u\|}{\|u\|}\right)^{-1} \\
&=\|(z-A)^{-1}\|^{-1}=\x{dist}[z,\spec(A)].
\end{align*}
The latter inequality is a consequence of the fact that $A=A^*$, see e.g. \cite[Lemma~1.3.2]{1995Davies}.
\end{proof}

According to this lemma, $F(x)$ can only be small whenever $x\in \RR$ is close to $\spec(A)$.  We can establish a precise connection between the second order spectrum of $A$ relative to $\mathcal{L}$ and $F(x)$ by means of the following function,
\begin{equation*}
 G(z)=\min_{0\not=\underline{v}\in \mathbb{C}^n} \frac{\|Q(z)\underline{v}\|}{\|\underline{v}\|} \qquad \text{for }z\in \mathbb{C}.
\end{equation*}
Clearly,
\[
      \spec_2(A,\mathcal{L})= \{z\in \mathbb{C}:G(z)=0 \}.
\]

\begin{lem} \label{lel22new}
Assume that $\{b_j\}_{j=1}^n$ is an orthonormal set, so that $\mathbf{A}_0=I$. Then
\[
        G(x)=F(x)^2 \qquad \forall x\in \RR.
\]
\end{lem}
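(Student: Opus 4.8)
The plan is to read both $G(x)$ and $F(x)^2$, for each fixed $x\in\RR$, as the smallest eigenvalue of one and the same non-negative self-adjoint operator on $\mathcal{L}$.

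Let $P_{\mathcal{L}}$ be the orthogonal projection of $\mathcal{H}$ onto $\mathcal{L}$, and set $T=P_{\mathcal{L}}(A-x)^2|_{\mathcal{L}}$, the compression of $(A-x)^2$ to $\mathcal{L}$; equivalently, $T$ is the operator on $\mathcal{L}$ determined by $\langle Tu,v\rangle=\langle(A-x)u,(A-x)v\rangle$ for $u,v\in\mathcal{L}$. Since $x\in\RR$ and $A=A^*$, the operator $A-x$ is self-adjoint, so $T$ is self-adjoint on $\mathcal{L}$, and it is non-negative because $\langle Tv,v\rangle=\|(A-x)v\|^2\ge0$. The first thing to verify is that, when $\{b_j\}_{j=1}^n$ is orthonormal, the matrix of $T$ in this basis is $Q(x)$ (possibly up to a transposition, depending on matrix conventions, which is immaterial below since $Q(x)$ and its transpose share eigenvalues and singular values). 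This is nothing more than the expansion $\langle(A-x)u,(A-x)v\rangle=\frak{a}^2(u,v)-2x\,\frak{a}^1(u,v)+x^2\frak{a}^0(u,v)$, valid for real $x$, combined with $\mathbf{A}_0=I$; orthonormality also makes the identification $v\leftrightarrow\b{v}$ an isometry, so that norms, eigenvalues and singular values may be computed interchangeably for $T$ and for $Q(x)$.

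Granting this, the two evaluations are immediate. From \eqref{eq23},
\[
F(x)^2=\min_{0\ne v\in\mathcal{L}}\frac{\|(A-x)v\|^2}{\|v\|^2}=\min_{0\ne v\in\mathcal{L}}\frac{\langle Tv,v\rangle}{\langle v,v\rangle}=\lambda_{\min}(T),
\]
the last equality being the Rayleigh quotient characterisation of the bottom eigenvalue of the self-adjoint operator $T$ on the finite-dimensional space $\mathcal{L}$. On the other hand $G(x)=\min_{0\ne\b{v}\in\mathbb{C}^n}\|Q(x)\b{v}\|/\|\b{v}\|$ is by definition the smallest singular value of $Q(x)$, hence of $T$; and for the non-negative self-adjoint operator $T$ the smallest singular value equals the smallest eigenvalue, so $G(x)=\lambda_{\min}(T)$ as well. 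Comparing the two displays gives $G(x)=F(x)^2$.

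This is an elementary computation, and the only step doing real work is the identity ``smallest singular value of $Q(x)$ equals smallest eigenvalue of $Q(x)$'': it is false for general Hermitian matrices, where one only gets $\min_j|\lambda_j(Q(x))|$, and it rests entirely on the non-negativity of $T$, i.e. on $x$ being real so that $(A-x)^2\ge0$. The orthonormality hypothesis is equally indispensable: it is what gives $\mathbf{A}_0=I$ and turns $v\mapsto\b{v}$ into an isometry, allowing one to pass between the minimisation over $\mathcal{L}$ defining $F$ and the minimisation over $\mathbb{C}^n$ defining $G$; without it the correct comparison would instead involve the weighted norm $\|v\|=\langle\mathbf{A}_0\b{v},\b{v}\rangle^{1/2}$.
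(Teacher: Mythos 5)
Your proposal is correct and follows essentially the same route as the paper: both arguments rest on the identity $\langle Q(x)\b{v},\b{v}\rangle=\|(x-A)v\|^2$ (valid because the basis is orthonormal), the resulting non-negativity of $Q(x)$ for real $x$, and the fact that for a non-negative Hermitian matrix the smallest singular value coincides with the smallest eigenvalue. The paper phrases this last step as $\min\spec Q(x)=\bigl(\min\spec Q(x)^2\bigr)^{1/2}$ rather than in the language of singular values, but the content is identical.
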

\begin{proof}
Firstly note that
\begin{align*}
\|(x-A)v\|^2&=\langle (x-A)v, (x-A)v\rangle\\
&=\left\langle \sum_{j=1}^n \hat{v}(j) (x-A)b_j,\sum_{k=1}^n \hat{v}(k) (x-A)b_k\right\rangle \\
&=\sum_{jk=1}^n \hat{v}(j)\overline{\hat{v}(k)} \frak{a}^2_x (b_j,b_k) =\langle Q(x) \underline{v},\underline{v}\rangle.
\end{align*}
Then $Q(x)$ is non-negative for $x \in \RR$ and
\begin{align*}
F(x)^2&=\min_{\underline{v}\in \mathbb{C}^n} \frac{\langle Q(x) \underline{v},\underline{v}\rangle}{\|\underline{v}\|^2} \\
&=\min\left[ \spec Q(x)\right]=\left(\min\left[ \spec Q(x)^2\right]\right)^{1/2} \\
&= \left(\min_{\underline{v}\in \mathbb{C}^n} \frac{\langle Q(x)^2 \underline{v},\underline{v}\rangle}{\|\underline{v}\|^2}  \right)^{1/2}  \\ &= \left(\min_{\underline{v}\in \mathbb{C}^n} \frac{\|Q(x) \underline{v}\|^2}{\|\underline{v}\|^2}  \right)^{1/2}=G(x).
\qedhere
\end{align*}
\end{proof}

By virtue of this lemma and the fact that both $F(z)$ and $G(z)$ are continuous, if  $G(z)=0$ for $z$ close to $\RR$, then one should expect that $F(\Re(z))$ is small.  From \eqref{eq24}, it would then follow that $\Re(z)$ is close to the spectrum of $A$ in this case. A precise statement on this matter was first established by Shargorodsky.  Below we include a proof which is independent from the seminal work \cite{Shargorodsky}.

\begin{thm}[Shargorodsky] \label{thmshar}
For $\mu\in \spec_2(A,\mathcal{L})$, let 
\begin{align*}
\mu_{\mathrm{up}}&=\Re(\mu)+|\Im(\mu)| \quad \text{and} \quad
\mu_{\mathrm{low}}=\Re(\mu)-|\Im(\mu)|. 
\end{align*}
Then 
\[
[\mu_{\mathrm{low}},\mu_{\mathrm{up}}]\cap \spec(A) \neq \varnothing.
\]
\end{thm}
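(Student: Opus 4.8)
The plan is to produce an actual point of $\spec(A)$ inside $[\mu_{\mathrm{low}},\mu_{\mathrm{up}}]$ by feeding the defining property of $\spec_2(A,\mathcal{L})$ into Lemma~\ref{lem21}. Let $\mu\in\spec_2(A,\mathcal{L})$ and fix a witness $u\in\mathcal{L}\setminus\{0\}$ with $\langle(A-\mu I)u,(A-\bar\mu I)v\rangle=0$ for every $v\in\mathcal{L}$. First I would simply take $v=u$ (legitimate, since $u\in\mathcal{L}$), which reduces the condition to the single scalar equation $\langle(A-\mu I)u,(A-\bar\mu I)u\rangle=0$.

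Next I would unpack this. Set $a=\Re(\mu)$, $b=\Im(\mu)$ and $B=A-aI$, so that $B=B^*$, $u\in\dom(B)=\dom(A)$, and the equation becomes $\langle(B-ibI)u,(B+ibI)u\rangle=0$. Expanding by (conjugate-)bilinearity and using that $\langle Bu,u\rangle\in\RR$, the two cross terms combine into a purely imaginary quantity and one is left with
\[
\|Bu\|^2-b^2\|u\|^2-2ib\,\langle Bu,u\rangle=0.
\]
Taking the real part gives the one identity I actually need,
\[
\|(A-\Re(\mu))u\|=|\Im(\mu)|\,\|u\|,\qquad u\neq0;
\]
the imaginary part only supplies the extra relation $\Im(\mu)\,\langle(A-\Re(\mu))u,u\rangle=0$, which is not required here.

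Finally, since $u\in\mathcal{L}\setminus\{0\}$, the definition~\eqref{eq23} of $F$ together with the identity above yields $F(\Re(\mu))\leq\|(\Re(\mu)-A)u\|/\|u\|=|\Im(\mu)|$, and Lemma~\ref{lem21} then gives $\x{dist}[\Re(\mu),\spec(A)]\leq F(\Re(\mu))\leq|\Im(\mu)|$. Because $\spec(A)$ is a non-empty closed subset of $\RR$, this distance is attained at some $\lambda_0\in\spec(A)$, and $|\lambda_0-\Re(\mu)|\leq|\Im(\mu)|$ is precisely the assertion $\lambda_0\in[\mu_{\mathrm{low}},\mu_{\mathrm{up}}]$; equivalently, if $[\mu_{\mathrm{low}},\mu_{\mathrm{up}}]$ were disjoint from $\spec(A)$, compactness of the interval against the closed set $\spec(A)$ would force $\x{dist}[\Re(\mu),\spec(A)]>|\Im(\mu)|$, a contradiction. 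The only points needing care are the bookkeeping in the complex expansion that delivers the identity $\|(A-\Re(\mu))u\|=|\Im(\mu)|\,\|u\|$, and the minor topological remark ensuring the spectral value lands in the closed interval rather than merely in its closure; the substantive estimate is entirely absorbed into Lemma~\ref{lem21}.
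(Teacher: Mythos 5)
Your proof is correct and follows essentially the same route as the paper's: test the defining relation with $v=u$, extract the identity $\|(A-\Re(\mu))u\|=|\Im(\mu)|\,\|u\|$ from the real part, and conclude via $F(\Re(\mu))\geq\x{dist}[\Re(\mu),\spec(A)]$ from Lemma~\ref{lem21}. The only cosmetic difference is that you handle the case $\Im(\mu)=0$ uniformly rather than splitting it off as the paper does.
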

\begin{proof}
Let $\mu=\alpha+i\beta$ for $\alpha,\beta \in \RR$. Then
\begin{equation} \label{eq25}
 \langle (\mu-A)u,(\bar{\mu}-A)v\rangle=\frak{a}^2_\alpha(u,v)+2i\beta\frak{a}_\alpha^1(u,v)-\beta^2\frak{a}^0(u,v).
\end{equation}
Let $u \in \mathcal{L}$ be such that 
\[
   \langle (\mu-A)u,(\bar{\mu}-A)v\rangle=0 \qquad \forall v\in \mathcal{L}.
\]
Then either $\beta = 0$, in which case $\alpha \in \spec(A)$ and $u\in \ker(\alpha-A)$, or $\beta\not=0$. 
If the latter happens, we use \eqref{eq25} for $u=v$ to get 
\[
 \|(\alpha-A)u\|^2-\beta^2\|u\|^2+2i\beta \frak{a}^1_\alpha(u,u)=0,
\]
so $\beta^2=\frac{\|(\alpha-A)u\|^2}{\|u\|^2}$ and $ \frak{a}^1_\alpha(u,u)=0$.
Hence, according to \eqref{eq24}, 
\begin{align*}
 |\beta|&= \frac{\|(\alpha-A)u\|}{\|u\|} \geq F(\alpha) \\ &\geq \x{dist}[\alpha,\spec(A)].
\end{align*}
\end{proof}

A numerical method for computing bounds on points in $\x{Spec}(A)$ arises naturally from this theorem.
Given $\mathcal{L}\subset \dom(A)$, find those conjugate pairs of $\spec_2(A,\mathcal{L})$ which are close to
the real line. These will give small intervals containing points in $\spec(A)$. 
This method has been referred-to as the \emph{quadratic method} and its implementation in concrete models has been
examined in \cite{Levitin,2006Boulton, 2008Boulton,2010Boulton, 2010strauss}. In sections~\ref{convergencesec}  and 
\ref{schrosec}
we give conditions on $\mathcal{L}$ ensuring spectral exactness, that is convergence of part of $\spec_2(A,\mathcal{L})$ to $\x{Spec}(A)$ in some precise regime $n\to \infty$.

The quadratic method is based on the idea that the truncated operator $(A-z)^2\!\upharpoonright_\mathcal{L}$ will be non-invertible for $z$ close to the real line, only if $z$ is close to the spectrum of $A$. In turns, there is an underlying mapping theorem for quadratic projected operators, as described by \cite[Lemma~2.6]{2010Boulton}, which is not available in general for the classical Galerkin method \cite[Remark~4]{2012Boulton}. This mapping theorem is described next and it will be crucial in our examination of convergence in Section~\ref{convergencesec}.  

\begin{lem}   \label{mappinglemma}
Let $a\in \mathbb{R}\setminus \spec(A)$. Then
\[
      z,\overline{z}\in \spec_2(A,\mathcal{L}) \qquad \iff \qquad w,\overline{w}\in \spec_2(B,\mathcal{G})
\] 
where $w=(z-a)^{-1}$, $B=(A-a)^{-1}$ and $\mathcal{G}=(A-a)\mathcal{L}$.
\end{lem}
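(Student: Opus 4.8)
The plan is to reduce the statement to a direct algebraic manipulation of the defining condition \eqref{eq21} for the second order spectrum, exploiting the fact that the map $\lambda \mapsto (\lambda-a)^{-1}$ is a Möbius transformation and that $A-a$ carries $\mathcal{L}$ bijectively onto $\mathcal{G}$. First I would fix $a\in\RR\setminus\spec(A)$ and set $B=(A-a)^{-1}$, which is a bounded self-adjoint operator with $\mathcal{G}=(A-a)\mathcal{L}\subset\mathcal{H}=\dom(B)$. I would then observe that the correspondence $u\mapsto \tilde u := (A-a)u$ is a linear bijection $\mathcal{L}\to\mathcal{G}$, and likewise $v\mapsto \tilde v$, so that quantifying over $u\in\mathcal{L}\setminus\{0\}$ and $v\in\mathcal{L}$ is the same as quantifying over $\tilde u\in\mathcal{G}\setminus\{0\}$ and $\tilde v\in\mathcal{G}$.

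The heart of the matter is the identity relating the sesquilinear form in the definition of $\spec_2(A,\mathcal{L})$ with that of $\spec_2(B,\mathcal{G})$. Writing $z\ne a$ (the case $z=a$ being excluded since $a\notin\spec(A)\supset$ would force $F$-type obstructions — more precisely $z=a$ cannot lie in $\spec_2(A,\mathcal{L})$ by Theorem~\ref{thmshar} as $a\notin\spec(A)$, and similarly $w=0\notin\spec_2(B,\mathcal{G})$ since $B$ is invertible, so both sides are automatically false there) and $w=(z-a)^{-1}$, I would compute, for $u,v\in\mathcal{L}$ with $\tilde u=(A-a)u$, $\tilde v=(A-a)v$,
\[
\langle (A-zI)u,(A-\bar z I)v\rangle
= \langle \big(I-(z-a)B^{-1}\big)^{-1}\!\!\cdot\text{(something)}\rangle,
\]
but more cleanly: since $(A-z) = (A-a) - (z-a) = (A-a)\big(I-(z-a)B\big)$ on $\dom(A)$, and $u=B\tilde u$, one gets $(A-z)u = \tilde u-(z-a)B\tilde u = -(z-a)\big(B-wI\big)\tilde u\cdot(-1)$, i.e. $(A-z)u = (z-a)(wI-B)\tilde u$. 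Substituting this and the analogous expression for $(A-\bar z)v$ yields
\[
\langle (A-zI)u,(A-\bar z I)v\rangle = |z-a|^2\,\overline{\Big(\tfrac{z-a}{\overline{z-a}}\Big)}\cdot\langle (wI-B)\tilde u,(\bar w I-B)\tilde v\rangle
\]
up to a nonzero scalar factor — I would be careful to track the conjugates so that the factor multiplying $\langle(B-wI)\tilde u,(B-\bar w I)\tilde v\rangle$ is a nonzero complex number depending only on $z,a$. Consequently the form vanishes for all $v\in\mathcal{L}$ with a given nonzero $u$ if and only if $\langle (B-wI)\tilde u,(B-\bar w I)\tilde v\rangle=0$ for all $\tilde v\in\mathcal{G}$, which is exactly the condition $w\in\spec_2(B,\mathcal{G})$. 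Taking complex conjugates throughout (and using that $\overline{(z-a)^{-1}}=(\bar z-a)^{-1}$) handles the $\overline z\leftrightarrow\overline w$ half of the equivalence simultaneously.

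The main obstacle I anticipate is purely bookkeeping: keeping the scalar prefactor $|z-a|^2\cdot(z-a)\overline{(\bar z - a)}^{-1}$ or whatever it turns out to be genuinely nonzero and correctly conjugate-linear in the right slot, so that the biconditional is clean. One subtlety worth spelling out is that $\mathcal{G}\subset\dom(B)=\mathcal{H}$ trivially, so there is no domain issue on the $B$-side, whereas on the $A$-side we needed $\mathcal{L}\subset\dom(A)$ to even write the forms — this is why the hypothesis $\mathcal{L}\subset\dom(A)$ is used and why $\mathcal{G}$ need not be further restricted. I would close by noting that the equivalence is manifestly symmetric under swapping $(A,\mathcal{L},z)\leftrightarrow(B,\mathcal{G},w)$ since $A = B^{-1}+a$ and $z = w^{-1}+a$, so no separate argument for the reverse implication is required.
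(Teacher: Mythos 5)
Your argument is correct and is essentially the paper's own proof: both rest on the identity $\langle (A-z)u,(A-\bar z)v\rangle=(z-a)^{2}\langle (B-w)\tilde u,(B-\bar w)\tilde v\rangle$ under the bijection $u\mapsto\tilde u=(A-a)u$ of $\mathcal{L}$ onto $\mathcal{G}$, the nonvanishing of the scalar factor for $z\neq a$ being all that matters. (Your displayed prefactor should simplify, since $a\in\RR$ gives $\overline{\bar z-a}=z-a$, to exactly $(z-a)^{2}$; your handling of the degenerate points $z=a$ and $w=0$ replaces the paper's appeal to the Spectral Mapping Theorem together with Theorem~\ref{thmshar} for real $z$, but amounts to the same thing.)
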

\begin{proof}
Without loss of generality we can assume that $z$ (and hence $w$) are non-real. Otherwise, the stated result follows directly from the Spectral Mapping Theorem combined with Theorem~\ref{thmshar}.

Let $u,\,v \in \mathcal{L}$. Let $\tilde{u}=(A-a)u\in \mathcal{G}$ and $\tilde{v}=(A-a)v \in \mathcal{G}$. Note that
\begin{align*}
\langle (B-w)\tilde{u},(B-\overline{w})\tilde{v} \rangle  =   (z-a)^{-2} \langle(zI-A)u,(\overline{z}I-A)v\rangle.
\end{align*}
Since $z\not= a$, the left hand side will vanish if and only if the second term on the right vanish.
From the definition of the second order spectrum, this implies directly the desired statement.
\end{proof}

At first sight it might seem that the quadratic method is numerically too expensive for practical purposes, as it reduces to computing conjugate pairs which are the eigenvalues of a quadratic matrix polynomial problem. It is indeed true that, ultimately, the problem reduces to computing the eigenvalues of a companion matrix such as \eqref{eq27}, and that this matrix is not normal, so numerical calculation of its eigenvalues is intrinsically more unstable than computing the eigenvalues of a hermitean matrix problem.  On the other hand however, as suggested by Theorem~\ref{thmshar}, the method is extremely robust. Given any linear subspace $\mathcal{L}$ of the domain of $A$, projection onto the real line of $\spec_2(A,\mathcal{L})$
always provides true information about the spectrum of $A$.   


\section{Spectral exactness} \label{convergencesec}

Assume that a sequence of subspaces $\mathcal{L}$ increases towards $\dom(A)$. We now establish precise condition on this sequence, in order to ensure that points in the second order spectrum of $A$ relative to $\mathcal{L}$ approach the real line and hence the spectrum.

Spectral exactness of the quadratic method has been examined in detail in \cite{200666Boulton,20077Boulton,2010Boulton}. The result \cite[Theorem~3.4]{2010Boulton} provides a precise estimate on the convergence rate of the second order spectrum to the discrete spectrum. As it turns, see \cite[\S4(b)]{2010Boulton}, the rate derived from this result is generally sub-optimal.  Our main goal now is to improve the estimate on the order of this convergence. The two crucial ingredients in our proof below are the original statement of convergence \cite[Theorem~3.4]{2010Boulton} for the case of a bounded operator and the mapping property determined by Lemma~\ref{mappinglemma}. 

 Without further mention below the open ball of radius $\rho>0$ in the complex plane centred at $b\in \RR$ will be $\mathbb{B}(\rho,b)$.  We omit the proof of the following crucial statement, as it is a direct consequence of \cite[Theorem~3.4]{2010Boulton}. 

\begin{thm} \label{convergencequadraticboundedthm}
Let $B$ be a bounded operator. Let $\mu\in \spec(B)$ be an isolated eigenvalue and 
$\mathcal{E}=\{ \phi_1,\ldots,\phi_m\}\subset \ker(B-\mu)$ be an orthonormal set. Let $\mu_{\pm}\in \mathbb{R}$ be such that
\[
     \mu_-<\mu<\mu_+ \quad \text{and} \quad [\mu_-,\mu_+]\cap \spec(B)=\{\mu\}.
\]
There exist $\kappa>0$ and $\delta_0>0$
only dependant on $\mu_\pm$, $\mathcal{E}$ and $B$, ensuring the following.  If the trial subspace $\mathcal{G}\subset \mathcal{H}$ is such that
\[
     \max_{\phi\in \mathcal{E}} \min_{v\in \mathcal{G}}\|v-\phi\| \leq \delta
\]
for some $0<\delta<\delta_0$, then
\[
        \spec_2(B,\mathcal{G})\cap \mathbb{B}\left(\frac{\mu_+-\mu_-}{2},\frac{\mu_++\mu_-}{2}  \right)\subset \mathbb{B}(\kappa \delta^{1/2},\mu).
\]
\end{thm}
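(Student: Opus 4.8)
The plan is to obtain the statement directly from \cite[Theorem~3.4]{2010Boulton}, as indicated: that result is stated for an increasing sequence of trial subspaces approaching the domain of the operator, with the hypothesis measured by the best-approximation functional, and this is precisely the quantity $\max_{\phi\in\mathcal{E}}\min_{v\in\mathcal{G}}\|v-\phi\|$ appearing above. The only work is cosmetic: reconcile notation and extract from \cite{2010Boulton} that the constants $\kappa$ and $\delta_0$ depend only on $\mu_\pm$, on the finite orthonormal set $\mathcal{E}$, and on $B$. So no genuine re-proof is needed, and below I only describe how one would reconstruct the argument.

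First I would reduce the assertion to an estimate on the imaginary part alone. Because $\mu_-,\mu_+\notin\spec(B)$ and $\spec(B)\cap(\mu_-,\mu_+)=\{\mu\}$, there is a $\gamma>0$, depending only on $B$ and on $\mu_\pm$, such that $\spec(B)\cap(\mu_--\gamma,\mu_++\gamma)=\{\mu\}$. Suppose $z=\alpha+i\beta$ (with $\alpha,\beta\in\RR$) lies in $\mathbb{B}\!\left(\tfrac{\mu_+-\mu_-}{2},\tfrac{\mu_++\mu_-}{2}\right)$; then $\alpha\in[\mu_-,\mu_+]$. If moreover $|\beta|<\gamma$, then $[\alpha-|\beta|,\alpha+|\beta|]\subset(\mu_--\gamma,\mu_++\gamma)$, and since Theorem~\ref{thmshar} forces this interval to meet $\spec(B)$, it must meet it at $\mu$; hence $|\alpha-\mu|\leq|\beta|$ and $|z-\mu|\leq\sqrt2\,|\beta|$. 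Therefore it suffices to prove that there are $\kappa'>0$ and $\delta_0>0$ with $\kappa'\delta_0^{1/2}<\gamma$, depending only on $\mu_\pm$, $\mathcal{E}$, $B$, such that every $z\in\spec_2(B,\mathcal{G})$ in the said disc has $|\Im z|\leq\kappa'\,\delta^{1/2}$; the theorem then holds with $\kappa=\sqrt2\,\kappa'$.

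For the reduced estimate, the relevant ingredients are as follows. If $z=\alpha+i\beta\in\spec_2(B,\mathcal{G})$ is witnessed by $u\in\mathcal{G}$ with $\|u\|=1$, then testing the defining relation with $v=u$ and separating real and imaginary parts (exactly as in the proof of Theorem~\ref{thmshar}) gives $\frak{a}^1_\alpha(u,u)=0$ and $\beta^2=\|(B-\alpha)u\|^2$; so $|\beta|$ is the standard deviation of $B$ in the state $u$, and one must show it is $O(\delta^{1/2})$. Choosing an orthonormal basis of $\mathcal{G}$ one has $\mathbf{A}_0=I$, hence $\spec_2(B,\mathcal{G})=\spec(C)$ by \eqref{comp_form}; and testing the function $F$ of \eqref{eq23} (for $B$ and $\mathcal{G}$) with a vector $\delta$-close to $\phi_1$, together with $(B-\mu)\phi_1=0$, gives $F(\mu)\leq 2\|\mu-B\|\,\delta$, so that by Lemma~\ref{lel22new} the matrix $Q(\mu)$, hence also the companion matrix $C-\mu I$, has a singular value of size $O(\delta^2)$. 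The substantive step, which is the technical heart of \cite{2010Boulton}, is to convert this smallness into the conclusion that some point of $\spec(C)$ lies within $O(\delta^{1/2})$ of $\mu$ and that no point of $\spec_2(B,\mathcal{G})$ in the disc escapes to distance $\gg\delta^{1/2}$; the hypothesis $[\mu_-,\mu_+]\cap\spec(B)=\{\mu\}$ enters here, isolating the cluster and keeping the remaining second order spectrum away. The main obstacle — and the reason the exponent is $\tfrac12$ and not $1$ — is that $C$ is not self-adjoint, so passing from ``$C-\mu I$ nearly singular'' to ``$\spec(C)$ nearly contains $\mu$'' requires a resolvent bound for $C$ that is uniform in $\dim\mathcal{G}$; establishing that bound is precisely the part we quote from \cite{2010Boulton}.
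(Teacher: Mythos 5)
Your proposal is correct and follows essentially the same route as the paper: the paper explicitly omits the proof of this theorem, stating that it is a direct consequence of \cite[Theorem~3.4]{2010Boulton}, and you likewise defer the substantive step (the uniform resolvent-type bound for the non-normal companion pencil) to that reference. The surrounding reductions you supply --- using Theorem~\ref{thmshar} to pass from an imaginary-part estimate to containment in $\mathbb{B}(\kappa\delta^{1/2},\mu)$, and Lemma~\ref{lel22new} to show $Q(\mu)$ is nearly singular --- are sound but are not required by the paper, which simply quotes the cited result.
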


In Theorem~\ref{convergencequadraticboundedthm} as well as in the next corollary,  the corresponding isolated eigenvalue might be of infinite multiplicity and the orthonormal set $\mathcal{E}$ might or might not be a basis of the eigenspace. The following is the main result of this paper. 

\begin{col} \label{convquadgeneral}
Let $\lambda\in \spec(A)$ be an isolated eigenvalue and let 
$\mathcal{E}=\{ \phi_1,\ldots,\phi_m\}\subset\ker(A-\lambda)$ be an orthonormal set. 
Let $\lambda_{\pm}\in \mathbb{R}$ be such that
\[
     \lambda_-<\lambda<\lambda_+ \quad \text{and} \quad [\lambda_-,\lambda_+]\cap \spec(A)=\{\lambda\}.
\]
 There exist $K>0$ and $\varepsilon_0>0$ only dependant on $\lambda_{\pm}$, $\mathcal{E}$ and $A$, ensuring the following.  If $\mathcal{L}\subset \dom(A)$ is such that
\[
      \max_{\phi\in \mathcal{E}} \min_{u\in \mathcal{L}} \left( \|u-\phi\|+\|A(u-\phi)\| \right)\leq \varepsilon
\]
for some $0<\varepsilon<\varepsilon_0$, then
\[
        \spec_2(A,\mathcal{L})\cap \mathbb{B}\left(\frac{\lambda_+-\lambda_-}{2},\frac{\lambda_++\lambda_-}{2}  \right)\subset \mathbb{B}(K \varepsilon^{1/2},\lambda).
\]
\end{col}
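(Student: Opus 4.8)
The plan is to deduce the statement from Theorem~\ref{convergencequadraticboundedthm} by means of the Cayley-type transform of Lemma~\ref{mappinglemma}. Since $\spec(A)$ is closed and $\lambda_-\notin\spec(A)$, I would first fix a point $a\in\RR\setminus\spec(A)$ with $a<\lambda_-$, and put $B=(A-a)^{-1}$ (bounded and self-adjoint), $\mu=(\lambda-a)^{-1}$ and $\mathcal{G}=(A-a)\mathcal{L}$. A direct computation gives $\ker(B-\mu)=\ker(A-\lambda)$, so $\mathcal{E}$ is an orthonormal subset of $\ker(B-\mu)$; and, writing $\mu_+=(\lambda_--a)^{-1}$ and $\mu_-=(\lambda_+-a)^{-1}$, the spectral mapping theorem together with $a<\lambda_-<\lambda<\lambda_+$ gives $\mu_-<\mu<\mu_+$ and $[\mu_-,\mu_+]\cap\spec(B)=\{\mu\}$. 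Theorem~\ref{convergencequadraticboundedthm} then furnishes constants $\kappa>0$ and $\delta_0>0$ depending only on $\lambda_\pm$, $a$, $\mathcal{E}$ and $A$.

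Next I would transfer the approximation hypothesis from $\mathcal{L}$ to $\mathcal{G}$. For $\phi\in\mathcal{E}$ one has $(A-a)\phi=(\lambda-a)\phi$; picking $u\in\mathcal{L}$ with $\|u-\phi\|+\|A(u-\phi)\|\le\varepsilon$ and setting $v=(\lambda-a)^{-1}(A-a)u\in\mathcal{G}$, one gets $v-\phi=(\lambda-a)^{-1}(A-a)(u-\phi)$, whence
\[
\|v-\phi\|\le|\lambda-a|^{-1}\bigl(\|A(u-\phi)\|+|a|\,\|u-\phi\|\bigr)\le C_1\varepsilon,\qquad C_1=|\lambda-a|^{-1}\max\{1,|a|\}.
\]
Thus $\max_{\phi\in\mathcal{E}}\min_{v\in\mathcal{G}}\|v-\phi\|\le\delta$ with $\delta=C_1\varepsilon$, and as long as $\varepsilon<\delta_0/C_1$ Theorem~\ref{convergencequadraticboundedthm} applies to $B$ and $\mathcal{G}$, giving
\[
\spec_2(B,\mathcal{G})\cap\mathbb{B}\!\left(\tfrac{\mu_+-\mu_-}{2},\tfrac{\mu_++\mu_-}{2}\right)\subset\mathbb{B}\bigl(\kappa\delta^{1/2},\mu\bigr).
\]

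The final step is to pull this conclusion back along the Möbius transformation $\psi(z)=(z-a)^{-1}$. The key point is that $\psi$ maps the disk $\mathbb{B}\!\left(\tfrac{\lambda_+-\lambda_-}{2},\tfrac{\lambda_++\lambda_-}{2}\right)$ onto the disk $\mathbb{B}\!\left(\tfrac{\mu_+-\mu_-}{2},\tfrac{\mu_++\mu_-}{2}\right)$: the boundary of the former meets $\RR$ orthogonally at $\lambda_\pm$, so by conformality and $\psi(\RR)=\RR$ its image is the circle through $\mu_\mp=\psi(\lambda_\pm)$ that meets $\RR$ orthogonally, that is the circle of diameter $[\mu_-,\mu_+]$; and since $a$ lies strictly to the left of the first disk, $\psi(a)=\infty$ is not in the image, so open disk goes to open disk. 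Given $z\in\spec_2(A,\mathcal{L})\cap\mathbb{B}\!\left(\tfrac{\lambda_+-\lambda_-}{2},\tfrac{\lambda_++\lambda_-}{2}\right)$, the conjugation symmetry of $\spec_2$ and Lemma~\ref{mappinglemma} yield $w=\psi(z)\in\spec_2(B,\mathcal{G})$, which by the above lies in the $\mu$-disk, so $|w-\mu|<\kappa\delta^{1/2}$. From $z-\lambda=w^{-1}-\mu^{-1}=(\mu-w)/(\mu w)$, and after shrinking $\varepsilon_0$ so that $\kappa\delta^{1/2}<\mu/2$ (whence $|w|>\mu/2$), one concludes $|z-\lambda|<2\mu^{-2}\kappa\delta^{1/2}=K\varepsilon^{1/2}$ with $K=2\kappa\,C_1^{1/2}(\lambda-a)^2$; it suffices to take $\varepsilon_0=\min\{\delta_0/C_1,\ \mu^2/(4\kappa^2 C_1)\}$.

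The only genuinely delicate part is this conformal bookkeeping: one must choose $a$ outside the disk $\mathbb{B}\!\left(\tfrac{\lambda_+-\lambda_-}{2},\tfrac{\lambda_++\lambda_-}{2}\right)$ — which, crucially, is still possible with $a\notin\spec(A)$ precisely because $\lambda_-\notin\spec(A)$ and $\spec(A)$ is closed — and pair it with exactly the matching $\mu_\pm$, for otherwise $\psi$ would carry that disk onto the complement of a disk and the inclusion coming from Theorem~\ref{convergencequadraticboundedthm} would be useless. The transitions between $\mathcal{L}$ and $\mathcal{G}$ and between $z$ and $w$ are routine; note in particular that only upper bounds on distances to subspaces are ever needed, obtained by displaying a single approximant, so the distinction between infimum and minimum plays no role.
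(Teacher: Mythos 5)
Your proposal is correct and follows essentially the same route as the paper's proof: reduce to the bounded operator $B=(A-a)^{-1}$ via the M\"obius map of Lemma~\ref{mappinglemma}, transfer the form-type approximation hypothesis from $\mathcal{L}$ to $\mathcal{G}=(A-a)\mathcal{L}$ with a constant of order $|\lambda-a|^{-1}(1+|a|)$, invoke Theorem~\ref{convergencequadraticboundedthm}, and pull the enclosure back, shrinking $\varepsilon_0$ so that $\kappa\delta^{1/2}<|\mu|/2$ keeps the pulled-back disk of radius $O(\varepsilon^{1/2})$. The only (harmless) differences are that you fix $a<\lambda_-$ whereas the paper allows any $a\in\RR\setminus(\spec(A)\cup[\lambda_-,\lambda_+])$, and your justification that $\psi$ carries the $\lambda$-disk onto the $\mu$-disk (orthogonality to $\RR$ plus $\psi(a)=\infty$ lying outside the image) is actually spelled out more carefully than in the paper, which merely asserts this mapping property.
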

\begin{proof}
Let 
\[
     a\in \RR\setminus \Big(\spec(A) \cup[\lambda_{-},\lambda_+]\Big).
\]
The existence of $a$ is ensured by the fact that $\lambda_{\pm}\not\in \spec(A)$ together with the fact that $\spec(A)$ is closed.
Let $B=(A-a)^{-1}$. We combine Lemma~\ref{mappinglemma} with Theorem~\ref{convergencequadraticboundedthm} for  $\mathcal{G}=(A-a)\mathcal{L}$ and $\mu=(\lambda-a)^{-1}$, as follows. Observe that $\mu$ is an isolated eigenvalue of $B$ and that $\mathcal{E}$ are associated eigenfunctions.

Let
\[
   f(z)=\frac{1}{z-a} \qquad \text{and} \qquad g(w)=\frac{1+wa}{w}.
\]
Then $f(z)$ is a M\"obius transformation and $g(w)$ is its inverse. Moreover
\[
  f(\spec(A)\cup \{\infty\})=\spec(B) \quad \text{and} \quad g(\spec(B))=\spec(A)\cup \{\infty\}.
\]
According to Lemma~\ref{mappinglemma},  
\[
  f(\spec_2(A,\mathcal{L}))=\spec_2(B,\mathcal{G}) \quad \text{and} \quad g(\spec_2(B,\mathcal{G}))=\spec_2(A,\mathcal{L}).
\]
Note that $f(z)$ maps the disk
\[
\mathbb{B}   \left(\frac{\lambda_+-\lambda_-}{2},\frac{\lambda_++\lambda_-}{2}  \right) 
\] 
into a disk with diameter a segment containing $\mu$ in its interior. Let $(\mu_-,\mu_+)\subset \RR$ denote such a segment. Then
\[
        \mathbb{B}\left(\frac{\mu_+-\mu_-}{2},\frac{\mu_++\mu_-}{2}  \right)= f\left( \mathbb{B}   \left(\frac{\lambda_+-\lambda_-}{2},\frac{\lambda_++\lambda_-}{2}  \right)  \right)
\]
and
\[
     [\mu_-,\mu_+]\cap \spec(B)=\{\mu\}.
\]

Let $\delta_0$ and $\kappa$ be the constants found by Theorem~\ref{convergencequadraticboundedthm} with the above data. 
Let $d=|\lambda-a|>0$ and
\[
    \varepsilon_0=\frac{d}{1+|a|} \min\left\{\delta_0,\frac{|\mu|^2}{2\kappa^2}\right\}>0.
\]
If 
\[
      \|u-\phi\|+\|A(u-\phi)\|<\varepsilon <\varepsilon_0
\]
for $u\in \mathcal{L}$ and $\phi\in \mathcal{E}$, then fixing $\tilde{u}=\frac{1}{\lambda-a} u$ gives
\begin{align*}
     \|(A-a)\tilde{u} -\phi\|&=  \|(A-a)\tilde{u} -\frac{\lambda-a}{\lambda-a}\phi\| \\
        &=\frac{1}{|\lambda-a|} \|(A-a)(u-\phi)\|    \\
        &\leq \frac{1}{d} \left(|a|\|u-\phi\|+\|A(u-\phi)\| \right) \\
        & \leq\frac{1+|a|}{d} \varepsilon <\delta_0.
\end{align*} 
Define $\delta=\frac{1+|a|}{d} \varepsilon$. Then $v=(A-a) \tilde u\in \mathcal{G}$ yields 
\[\|v-\phi\|\leq \delta <\delta_0.\] Thus the hypothesis of this corollary implies the hypothesis of 
Theorem~\ref{convergencequadraticboundedthm}.

Now the conclusion of Theorem~\ref{convergencequadraticboundedthm} gives
\begin{align*}
     \spec_2(A,\mathcal{L}) & \cap \mathbb{B}   \left(\frac{\lambda_+-\lambda_-}{2},\frac{\lambda_++\lambda_-}{2}  \right)  \\& = g\left(  \spec_2(B,\mathcal{G}) \cap  \mathbb{B}\left(\frac{\mu_+-\mu_-}{2},\frac{\mu_++\mu_-}{2}  \right) \right) \\
     &\subset g\left(\mathbb{B}(\kappa \delta^{1/2},\mu)\right) .
\end{align*}
The diameter of the latter is
\[
 \frac{2\kappa K_1^{1/2} }{\mu^2-\kappa^2 K_1\varepsilon } \varepsilon^{1/2}\qquad
\text{for} \qquad K_1=\frac{1+|a|}{d}.
\]
In turns, the definition of $\varepsilon_0$ ensures that
\[
      \frac{2\kappa K_1^{1/2} }{\mu^2-\kappa^2 K_1\varepsilon }\leq 
       K \qquad \text{for} \qquad K=\frac{4\kappa K_1^{1/2} }{\mu^2}.
\]
As $\lambda\in g\left(\mathbb{B}(\kappa \delta^{1/2},\mu)\right)$, then
\[
 g\left(\mathbb{B}(\kappa \delta^{1/2},\mu)\right) \subset\mathbb{B}(K\varepsilon^{1/2},\lambda).
\]
This ensures the conclusion claimed in the corollary.
\end{proof}

Observe that $K\to \infty$ and $\varepsilon_0\to 0$ in the regime $|\lambda-a|\to 0$.
We will see in Section~\ref{numerical} that the conclusion of this corollary is sub-optimal in the power of the parameter $\varepsilon$. However, as mentioned earlier,  it supersedes significantly \cite[Theorem~3.4]{2010Boulton} in the case of $A$ unbounded.


\section{Eigenvalue bounds for Schr{\"o}dinger operators} \label{schrosec}

We now examine the implementation of the quadratic method in a particularly simple instance. Set $A=H$ a  one-dimensional Schr{\"o}dinger operator. We consider that the trial subspaces $\mathcal{L}$ are constructed  via the finite element method on a large, but finite, segment. Under standard assumptions on the convergence of the finite element subspaces as the mesh refines and the length of the segment grows, we determine an upper bound on the precise convergence rate at which conjugate pairs in the second order spectra converge to the eigenvalues of $H$.

We begin by fixing the precise setting for the operator $H$. Let
\begin{equation*} \label{eq61}
 H u(x)=-u''(x)+V(x) u(x)  \qquad x\in (-\infty,\infty)
\end{equation*}
acting on $L^2(\mathbb{R})$. We assume that the potential $V(x)$ is real-valued, continuous and $V(x) \to \infty$  as $|x| \to \infty$. These conditions ensure that the operator $H$ is self-adjoint on a domain defined via Friedrich's extensions and it has a compact resolvent \cite[Theorem XIII.67]{1980Barry}.
The domain of closure of the quadratic form associated to $H$ is
\[
    \dom (\frak{a}^{1}) = W^{1,2}(\mathbb{R}) \cap \{u\in L^2(\mathbb{R}):\|V^{1/2}u\|<\infty\}.
\]
Note that this is the intersection of the maximal domains of the momentum operator and the operator of multiplication by $|V|^{1/2}$.

The conditions on the potential imply that $V(x) \geqslant b_0>-\infty$ for all $x \in \mathbb{R}$ and a suitable constant  $b_0 \in \mathbb{R}$. Then $H$ is bounded below in the sense of quadratic forms, $H \geq b_0$. Without loss of generality we assume below that $b_0>0$.

By compactness of the resolvent, $H$ has a purely discrete spectrum, comprising only eigenvalues accumulating at $+\infty$ and a basis of eigenfunctions. Moreover,  by the fact that we are in one space dimension, we know that all these eigenvalues are simple. The eigenfunctions\footnote{Recall that the potential is continuous.} are $C^\infty$ and they decay exponentially fast at infinity \cite[Theorem C.3.3]{1982Barry}. 
We denote
\[
    \x{Spec}(H)=\{\lambda_1< \lambda_2<\ldots \}
\]
and let the orthonormal basis $\{\psi_j\}_{j=1}^\infty$ of $L^2(\mathbb{R})$ be such that \[H\psi_j=\lambda_j\psi_j.\] Without further mention,  below we often suppress the index $j$ from the eigenvalue and the eigenfunction, when the context allows it.

Let us describe the construction of the trial spaces.  Let $L > 0$. Consider the restricted operator
\begin{equation*} \label{eq62}
  H_L u(x)=-u''(x)+V(x) u(x)    \qquad x\in (-L,L),
\end{equation*}
subject to Dirichlet boundary conditions:   $u(-L)=u(L)=0$. As $L\to \infty$, we expect that the spectrum of $H_L$ approaches the spectrum of $H$. In fact, according to Theorem~\ref{lem63} below, this turns out to happen exponentially fast (in $L$) for individual eigenvalues. 

Similarly to $H$, the operator $H_L$ acts on a domain also defined via Friedrich's extensions. Denote by $\frak{a}^{1,L}$ the quadratic form associated to $H_L$. The domain of closure of  $\frak{a}^{1,L}$ is
\[
 \dom(\frak{a}^{1,L})=W^{1,2}_0(-L,L)
\]
see \cite[Theorem~VI.2.23 and VI.4.2]{kato}. 

As $b_0>0$, the forms $\frak{a}^{1}$ and $\frak{a}^{1,L}$ are positive definite. Hence the quantities
\[
     \frak{a}^{1}(u,u)^{1/2} \qquad \text{and} \qquad   \frak{a}^{1,L}(u,u)^{1/2}
\]
define norms in $\dom(\frak{a}^{1})$ and $\dom(\frak{a}^{1,L})$ respectively.

Without further mention, everywhere below we assume that (additionally to the conditions above), $V(x)$ is such that for every $b>0$ there exists a constant $k_b>0$ ensuring
\[
    |V(x)| \leqslant k_b \mathrm{e}^{b|x|}\qquad \forall x \in \mathbb{R}.
\]
The following statement is well known. We include its proof, in order to keep a complete exposition of the subject. 

\begin{lem}\label{lem61}
There exist constants $c > 0$ and $a>0$ only dependant on $j\in\mathbb{N}$ and the potential $V$, such that 
\begin{equation*}
|\psi_j(x)|\leqslant c \mathrm{e}^{-a|x|} \qquad \text{and} \qquad  |\psi_j{''}(x)|\leqslant c \mathrm{e}^{-a|x|} \qquad \forall x \in \mathbb{R}.
\end{equation*}
 \end{lem}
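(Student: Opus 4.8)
The plan is to prove the exponential decay of $\psi_j$ and its second derivative via a standard Agmon-type / sub-solution argument, adapted to the hypothesis that $V$ may grow but is controlled by $k_b \mathrm{e}^{b|x|}$ for every $b>0$. First I would recall that since $\lambda_j$ is fixed and $V(x)\to\infty$, there is $R>0$ such that $V(x)-\lambda_j \geq 1$ (say) for all $|x|\geq R$; on this region $\psi_j$ solves $\psi_j'' = (V-\lambda_j)\psi_j$ with a positive coefficient, so $\psi_j$ has no interior oscillation and one expects decay. The cleanest route is a comparison principle: for a suitable $a>0$ the function $w(x)=c\,\mathrm{e}^{-a|x|}$ (with $c$ chosen so that $w\geq|\psi_j|$ on $|x|=R$, using that $\psi_j$ is continuous hence bounded) satisfies, for $|x|>R$, $-w'' + (V-\lambda_j)w = \big(V-\lambda_j-a^2\big)w \geq 0$ provided $a$ is small enough that $a^2 \leq 1 \leq V-\lambda_j$. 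Then $w\pm\psi_j$ are supersolutions of the same Schrödinger-type inequality on each half-line $(R,\infty)$ and $(-\infty,-R)$, are nonnegative at the finite endpoint, and — because $\psi_j\in L^2$ so $\psi_j(x)\to 0$ — do not blow up at infinity; a maximum-principle argument then forces $w\pm\psi_j\geq 0$, i.e. $|\psi_j(x)|\leq c\,\mathrm{e}^{-a|x|}$ for $|x|\geq R$. Enlarging $c$ to also dominate $\psi_j$ on the compact set $[-R,R]$ gives the first bound for all $x\in\mathbb{R}$.

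For the bound on $\psi_j''$, I would simply differentiate the eigenvalue equation: $\psi_j''(x) = \big(V(x)-\lambda_j\big)\psi_j(x)$. Thus $|\psi_j''(x)| \leq \big(|V(x)|+\lambda_j\big)|\psi_j(x)| \leq \big(k_b\mathrm{e}^{b|x|}+\lambda_j\big)\,c\,\mathrm{e}^{-a|x|}$. Here the growth hypothesis on $V$ is exactly what is needed: choosing $b = a/2$ in the bound $|V(x)|\leq k_b\mathrm{e}^{b|x|}$ yields $|\psi_j''(x)| \leq c'\,\mathrm{e}^{-(a/2)|x|}$ for a new constant $c'$, and after renaming $a\mapsto a/2$ and $c\mapsto\max\{c,c'\}$ one obtains both stated inequalities with a common pair of constants $c,a>0$ depending only on $j$ and $V$. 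It may be cleanest to pick the decay rate $a$ for $\psi_j$ small from the outset so that halving it is harmless.

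The main obstacle is making the maximum-principle step on a half-line rigorous, since we are on an unbounded interval and the operator $-\partial_x^2 + (V-\lambda_j)$ is not obviously invertible there with the right sign. The standard fix, which I would use, is to work on bounded intervals $(R,N)$: on each, $w-\psi_j$ satisfies $-(w-\psi_j)'' + (V-\lambda_j)(w-\psi_j)\geq 0$ with $V-\lambda_j>0$, so by the classical weak maximum principle its minimum over $[R,N]$ is attained at an endpoint; at $R$ it is $\geq 0$ by the choice of $c$, and at $N$ it is $\geq w(N)-\sup_{x\geq N}|\psi_j(x)|$, which is $\geq 0$ for $N$ large since $w(N)>0$ is fixed relative to $N$... more carefully, one lets $N\to\infty$ using $\psi_j(N)\to 0$ together with the fact that $w\geq 0$ everywhere, concluding $w-\psi_j\geq 0$ on $(R,\infty)$; the same for $w+\psi_j$ and for the left half-line. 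An alternative, avoiding maximum principles entirely, is to invoke the general Agmon/Combes–Thomas decay estimates for Schrödinger operators with confining potentials (e.g.\ \cite[Theorem C.3.1]{1982Barry}), from which $L^2$-exponential decay of eigenfunctions follows, and then upgrade to pointwise decay via elliptic regularity (Sobolev embedding in one dimension) and the equation $\psi_j''=(V-\lambda_j)\psi_j$; given the references already cited in the paper, this may in fact be the intended short route, and I would present the comparison argument as the self-contained option.
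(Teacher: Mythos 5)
Your proof is correct in substance, and its second half coincides exactly with the paper's: both derive the bound on $\psi_j''$ by rewriting the eigenvalue equation as $\psi_j''=(V-\lambda_j)\psi_j$, invoking the standing hypothesis $|V(x)|\leq k_b\mathrm{e}^{b|x|}$ with $b$ a fixed fraction of the decay rate, and absorbing the resulting sub-exponential factor by halving the exponent. Where you differ is in the first inequality: the paper simply quotes the pointwise exponential decay $|\psi_j(x)|\leq\tilde c\,\mathrm{e}^{-\tilde a|x|}$ from \cite[Theorem~C.3.3]{1982Barry} (your ``intended short route''), whereas your primary argument is a self-contained comparison with the supersolution $w=c\,\mathrm{e}^{-a|x|}$ on $\{|x|\geq R\}$, where $V-\lambda_j\geq 1$. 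That buys independence from the Agmon/Combes--Thomas machinery at the cost of one delicate point which you only partially resolve: on the truncated interval $[R,N]$ the sign of $w(N)\pm\psi_j(N)$ cannot be secured by comparing $w(N)$ with $\sup_{x\geq N}|\psi_j|$, since $w(N)$ itself tends to $0$ and the comparison would presuppose the very decay rate being proved. The standard repair is to apply the maximum principle to $w+\varepsilon\pm\psi_j$ for a constant $\varepsilon>0$ (still a supersolution because $V-\lambda_j>0$), observe that the right endpoint value is at least $\varepsilon-|\psi_j(N)|\geq 0$ for $N$ large because $\psi_j\in W^{1,2}(\mathbb{R})$ forces $\psi_j(x)\to 0$, and then let $\varepsilon\downarrow 0$. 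With that adjustment your argument is complete and, unlike the paper's one-line citation, is elementary and self-contained; the paper's route is shorter and yields the (possibly better) decay rate supplied by the cited theorem.
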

\begin{proof}
The identity
\[ -\psi{''}+V(x)\psi -\lambda \psi=0 \]
implies that
\[ 
 |\psi{''}(x)|=|V(x)-\lambda||\psi(x)|.
\]
By \cite[Theorem C.3.3]{1982Barry}, we know 
\[|\psi(x)|\leqslant \widetilde{c}~ \mathrm{e}^{-\widetilde{a}|x|}.\]
Let $a=\frac{\widetilde{a}}{2}$  and $b=\frac{a}{2}$. Then
\begin{align*}
 |\psi{''}(x)| &\leqslant \widetilde{c} \; |V(x)-\lambda| \; \mathrm{e}^{-\widetilde{a}|x|}\\
& \leq \widetilde{c} \left(|V(x)|+ |\lambda|\right) \mathrm{e}^{-\widetilde{a}|x|}\\
& \leq \widetilde{c}\left(k_b e^{(b-a)|x|} + |\lambda| \mathrm{e}^{-a|x|} \right) \mathrm{e}^{-a|x|}.
\end{align*}
\end{proof}

Here and everywhere below $a>0$ is a constant, found according to Lemma~\ref{lem61}, which might depend on $j$.

\begin{lem} \label{lem62}
For any $L$ sufficiently large,  there exists $c>0$ only dependant on $j$ such that
\[
    \int_{\mathbb{R}\setminus[-L,L]} |\psi'_j(x)|^2 \dx  \leq c \, \mathrm{e}^{-2aL}.
\]
\end{lem}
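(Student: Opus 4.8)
The plan is to control $\int_{\RR\setminus[-L,L]}|\psi'_j(x)|^2\dx$ by integration by parts, converting the $H^1$-tail bound into the $L^2$-tail bounds on $\psi_j$ and $\psi_j''$ that are already available from Lemma~\ref{lem61}. First I would split the domain into $(L,\infty)$ and $(-\infty,-L)$ and treat each half-line separately; by symmetry of the argument it suffices to estimate $\int_L^\infty|\psi'(x)|^2\dx$. On $(L,\infty)$ I would write $\int_L^\infty |\psi'|^2\dx = \big[\psi'(x)\overline{\psi(x)}\big]_L^\infty - \int_L^\infty \psi''(x)\overline{\psi(x)}\,\dx$. The boundary term at $+\infty$ vanishes because, by Lemma~\ref{lem61}, both $\psi$ and $\psi''$ decay like $\mathrm{e}^{-a|x|}$, and hence (via the ODE $\psi''=(V-\lambda)\psi$ together with continuity of $\psi'$ as the integral of $\psi''$) $\psi'$ also tends to $0$ at infinity; the boundary term at $L$ is $-\psi'(L)\overline{\psi(L)}$, which is bounded by $|\psi'(L)|\,|\psi(L)|$.

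Next I would estimate each surviving piece. For the integral term, Cauchy--Schwarz gives $\big|\int_L^\infty \psi''\overline{\psi}\,\dx\big|\leq \big(\int_L^\infty|\psi''|^2\dx\big)^{1/2}\big(\int_L^\infty|\psi|^2\dx\big)^{1/2}$, and both factors are $\leq c\,\mathrm{e}^{-2aL}$ up to constants by integrating the pointwise bounds of Lemma~\ref{lem61} (indeed $\int_L^\infty \mathrm{e}^{-2a|x|}\dx = \frac{1}{2a}\mathrm{e}^{-2aL}$), so this term is $O(\mathrm{e}^{-2aL})$. For the boundary term $|\psi'(L)|\,|\psi(L)|$, I have $|\psi(L)|\leq c\,\mathrm{e}^{-aL}$ directly; for $|\psi'(L)|$ I would use $\psi'(L) = -\int_L^\infty \psi''(x)\,\dx$ (valid since $\psi'(\infty)=0$), whence $|\psi'(L)|\leq \int_L^\infty |\psi''(x)|\dx \leq c\int_L^\infty \mathrm{e}^{-a|x|}\dx = \frac{c}{a}\mathrm{e}^{-aL}$. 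Multiplying, the boundary term is also $O(\mathrm{e}^{-2aL})$. Combining the two half-lines and absorbing all constants into a single $c$ depending only on $j$ (through $\widetilde c$, $a$, $\lambda_j$ and $k_b$) yields the claimed bound, with the phrase ``for $L$ sufficiently large'' simply guaranteeing, e.g., $L$ beyond the range where the auxiliary exponential factors in Lemma~\ref{lem61} are under control.

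The main obstacle, such as it is, is purely a matter of bookkeeping: justifying that the boundary term at $+\infty$ genuinely vanishes, which requires knowing $\psi'(x)\to 0$ as $x\to\infty$. This does not follow formally from $\psi,\psi''\in L^2$ alone, but it does follow from the pointwise exponential decay of $\psi''$ in Lemma~\ref{lem61}: integrability of $\psi''$ on $(L,\infty)$ forces $\psi'$ to have a finite limit at $+\infty$, and that limit must be $0$ since $\psi\in L^2$ (a nonzero limit of $\psi'$ would make $|\psi|$ grow linearly). I would state this briefly rather than belabour it. Everything else is routine integration of exponentials and Cauchy--Schwarz.
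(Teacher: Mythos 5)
Your argument is essentially identical to the paper's proof: split off the two half-lines, integrate by parts to reduce to $\int|\psi''||\psi|$ plus boundary terms, and control everything with the pointwise exponential bounds of Lemma~\ref{lem61}. If anything, you are slightly more careful than the paper, which simply asserts the bound $|\psi'(L)||\psi(L)|\leq c\,\mathrm{e}^{-2aL}$ on the boundary terms without the justification via $\psi'(L)=-\int_L^\infty\psi''$ that you provide.
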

\begin{proof}
\begin{align*}
 &\int_{\mathbb{R}\setminus[-L,L]} |\psi'(x)|^2 \dx  \\
 &=\left|\int_{-\infty}^{-L}\!\!\!+\int_{L}^{\infty} \left(-\psi''(x)\overline{\psi(x)}\right)\dx +\left[\psi'(x)\overline{\psi(x)}\right]^{-L}_{-\infty}+\left[\psi'(x)\overline{\psi(x)}\right]^{\infty}_{L} \right|\\
&\leq \int_{-\infty}^{-L}\!\!\!+\int_{L}^{\infty} \left(|\psi''(x)| |\psi(x)|\right)\dx \!+\!\left[|\psi'(x)||\psi(x)|\right]^{-L}_{-\infty}\!+\!\left[|\psi'(x)||\psi(x)|\right]^{\infty}_{L}\\
&\leq  c_1\, \mathrm{e}^{-3aL}+ c_2\, \mathrm{e}^{-3aL}+ c_3\, \mathrm{e}^{-2aL}+c_4\, \mathrm{e}^{-2aL}
\end{align*}
\end{proof}

In the following statements the cutoff function
\[
  h_L(x)= \begin{cases} 0 &\mbox{if }~ x\in(-\infty,-2L] \\
\exp\left({1-\frac{1}{1-(\frac xL +1)^2}}\right) & \mbox{if }~ x\in[-2L,-L] \\
1 & \mbox{if }~ x\in[-L,L]\\
\exp\left({1-\frac{1}{1-(\frac xL -1)^2}}\right) & \mbox{if }~ x\in[L,2L] \\
0 &\mbox{if }~ x\in[2L,\infty) 
\end{cases}
\]
and its derivative is
 \begin{equation*}
  h_L'(x)= \begin{cases} 0 &\mbox{if }~ x\in(-\infty,-2L] \\
-\frac{2(L+x)L^2}{x^2(2L+x)^2} h(x)& \mbox{if }~ x\in[-2L,-L] \\
0 & \mbox{if }~ x\in[-L,L]\\
\frac{2(L-x)L^2}{x^2(2L-x)^2} h(x) & \mbox{if }~ x\in[L,2L] \\
0 &\mbox{if }~ x\in[2L,\infty).
\end{cases}
\end{equation*}
Note that, for any function $v\in \dom (\frak{a}^{1})$, 
\[
h_Lv \in \dom (\frak{a}^{1,2\tilde{L}}) \qquad\qquad \forall \tilde{L}\geq L
\]
and also $h_Lv \in \dom (\frak{a}^{1})$.

\begin{lem} \label{lem63_max}
Fix $j\in \mathbb{N}$. Let 
\[
\psi_k^L=h_L\psi_k \qquad \text{and} \qquad U_j(L)=\x{Span}\{\psi^L_k\}_{k=1}^j.
\] There exist sufficiently large constants $L_j>0$ and $c_j>0$, ensuring the following. 
\begin{enumerate}
\item \label{prop1}  $\dim U_j(L)=j$ for all $L>L_j$.  
\item \label{prop2}  For any $v\in U_j(L)$ of unit $L^2$-norm, there exists $\phi\in \x{Span}\{\psi_k\}_{k=1}^j$
such that $\|\phi\|=1$ and
\[
       \frak{a}^{1}(v-\phi,v-\phi) \leq c_j e^{-2aL} \qquad \qquad \forall  L>L_j.
\]
\end{enumerate}
\end{lem}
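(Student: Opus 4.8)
The plan is to reduce both claims to a single quantitative fact: \emph{for each fixed $k$ there is $c>0$ such that}
\[
  \|(h_L-1)\psi_k\|^2\leq c\,\mathrm{e}^{-2aL}\qquad\text{and}\qquad\frak{a}^1\big((h_L-1)\psi_k,(h_L-1)\psi_k\big)\leq c\,\mathrm{e}^{-2aL}
\]
\emph{for all $L$ large.} Granting this, part~(\ref{prop1}) is immediate: since $h_L\equiv1$ on $[-L,L]$, the Gram matrix $G^L=[\langle\psi_p^L,\psi_q^L\rangle]_{p,q=1}^{j}$ has $\langle\psi_p^L,\psi_q^L\rangle-\delta_{pq}=\int_{\RR\setminus[-L,L]}(h_L^2-1)\psi_p\overline{\psi_q}\,\dx$, which by Cauchy--Schwarz and Lemma~\ref{lem61} is $O(\mathrm{e}^{-4aL})$; hence for some $L_j$ one has $\|G^L-I\|\leq\tfrac12$ whenever $L>L_j$, so $G^L$ is invertible and $\{\psi_k^L\}_{k=1}^{j}$ is linearly independent.

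To prove the key estimate I would note that $\big((h_L-1)\psi_k\big)'=h_L'\psi_k+(h_L-1)\psi_k'$ is supported in $\RR\setminus[-L,L]$, so
\[
  \frak{a}^1\big((h_L-1)\psi_k,(h_L-1)\psi_k\big)\leq2\!\int_{\RR\setminus[-L,L]}\!\!\!|h_L'|^2|\psi_k|^2\,\dx+2\!\int_{\RR\setminus[-L,L]}\!\!\!|\psi_k'|^2\,\dx+\int_{\RR\setminus[-L,L]}\!\!\!V|\psi_k|^2\,\dx.
\]
The middle term is $\leq c\,\mathrm{e}^{-2aL}$ by Lemma~\ref{lem62}. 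For the first term I would use the scaling $h_L(x)=h_1(x/L)$, so that $\|h_L'\|_\infty=L^{-1}\|h_1'\|_\infty$ is bounded uniformly in $L\geq1$; together with $|\psi_k(x)|\leq c\,\mathrm{e}^{-2a|x|}$ (available from the proof of Lemma~\ref{lem61}) this yields a bound $O(\mathrm{e}^{-4aL})$. For the last term the standing hypothesis on $V$ enters: choosing $b=2a$ gives $V(x)|\psi_k(x)|^2\leq k_{2a}c^2\mathrm{e}^{-2a|x|}$, whose integral over $\RR\setminus[-L,L]$ is $\leq c\,\mathrm{e}^{-2aL}$. The $L^2$-bound is obtained the same way, using only $|h_L-1|\leq1$.

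For part~(\ref{prop2}) I would fix $v\in U_j(L)$ with $\|v\|=1$ and, since $L>L_j$, write $v=\sum_{k=1}^{j}\alpha_k\psi_k^L$ uniquely; from $\|v\|^2=\langle G^L\underline{\alpha},\underline{\alpha}\rangle$ and $\|G^L-I\|\leq\tfrac12$ one gets $\sum_k|\alpha_k|^2\leq2$. Then I set $\phi_0=\sum_{k=1}^{j}\alpha_k\psi_k$; orthonormality of $\{\psi_k\}$ gives $\|\phi_0\|^2=\sum_k|\alpha_k|^2$ and $\big|\,\|\phi_0\|^2-1\,\big|=\big|\langle(G^L-I)\underline{\alpha},\underline{\alpha}\rangle\big|\leq c\,\mathrm{e}^{-4aL}$, so $\|\phi_0\|$ is bounded below and $\big|\,\|\phi_0\|-1\,\big|\leq c\,\mathrm{e}^{-4aL}$. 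The candidate is $\phi=\phi_0/\|\phi_0\|\in\x{Span}\{\psi_k\}_{k=1}^{j}$, which has unit norm. Writing $v-\phi_0=\sum_k\alpha_k(\psi_k^L-\psi_k)$, the triangle inequality for the norm $\frak{a}^1(\cdot,\cdot)^{1/2}$ and Cauchy--Schwarz give
\[
  \frak{a}^1(v-\phi_0,v-\phi_0)^{1/2}\leq\Big(\sum_k|\alpha_k|^2\Big)^{1/2}\Big(\sum_k\frak{a}^1\big((h_L-1)\psi_k,(h_L-1)\psi_k\big)\Big)^{1/2}\leq c\,\mathrm{e}^{-aL},
\]
using the key estimate. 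Finally $\phi_0-\phi=(\|\phi_0\|-1)\phi$, and for $\phi=\sum_k\beta_k\psi_k$ one has $\frak{a}^1(\phi,\phi)=\sum_k|\beta_k|^2\lambda_k\leq\lambda_j$, so $\frak{a}^1(\phi_0-\phi,\phi_0-\phi)\leq\lambda_j\big|\,\|\phi_0\|-1\,\big|^2\leq c\,\mathrm{e}^{-8aL}$; one more triangle inequality, after enlarging $L_j$ so that the $\mathrm{e}^{-4aL}$- and $\mathrm{e}^{-8aL}$-terms are dominated by $c_j\,\mathrm{e}^{-2aL}$, yields $\frak{a}^1(v-\phi,v-\phi)\leq c_j\,\mathrm{e}^{-2aL}$.

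I do not expect a genuinely hard step; the work is bookkeeping across the three exponential scales $\mathrm{e}^{-2aL}$, $\mathrm{e}^{-4aL}$, $\mathrm{e}^{-8aL}$, the determining one being $\mathrm{e}^{-2aL}$ from Lemma~\ref{lem62}. The points that need attention are: controlling $h_L'$ uniformly in $L$ (the scaling $h_L(\cdot)=h_1(\cdot/L)$ makes this clean); handling $\int_{\RR\setminus[-L,L]}V|\psi_k|^2$, which is exactly where the hypothesis $|V(x)|\leq k_b\mathrm{e}^{b|x|}$ is used with $b$ small relative to the decay rate of $\psi_k$; and passing from $\phi_0$ to the normalized $\phi$, which is harmless only because $\|\phi_0\|-1$ is of the smallest order $\mathrm{e}^{-4aL}$ while $\frak{a}^1(\phi,\phi)\leq\lambda_j$ stays bounded.
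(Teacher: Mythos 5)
Your proposal is correct and follows essentially the same route as the paper: both pass through the intermediate vector $\sum_k\alpha_k\psi_k$, bound $\|\psi_k^L-\psi_k\|$ and $\frak{a}^1(\psi_k^L-\psi_k,\psi_k^L-\psi_k)$ by $O(\mathrm{e}^{-2aL})$ using lemmas~\ref{lem61} and \ref{lem62}, and then renormalise. You are in fact more careful than the paper on two points it leaves implicit --- the Gram-matrix argument that makes the constants genuinely independent of the coefficients $\alpha_k$, and the explicit treatment of the $h_L'$ and potential terms in the form estimate --- but these are refinements of the same argument, not a different one.
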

\begin{proof} Since the $\psi_k$ are linearly independent in $L^2(\mathbb{R})$ and they are exponentially small for large $x$, then the set $\{\psi_k^L\}_{k=1}^j$ is linearly independent for $L>\tilde{L}$ where the latter is large enough. 

The existence of $c_j$ is ensured as follows. Let $v=\sum_{k=1}^j \alpha_k \psi_k^L$. Define $\tilde{\phi}=\sum_{k=1}^j \alpha_k \psi_k$ and $\phi=\frac{1}{\|\tilde{\phi}\|}\tilde{\phi}$. 
By Lemma~\ref{lem61}, we have
\[
    \|\psi_k^L-\psi_k \|^2 \leq c_{1}(k) e^{-2aL}.
\]
Then, 
\begin{align*}
   \|v-\tilde{\phi}\| &= \left\|\sum_{k=1}^j\alpha_k(\psi_k^L-\psi_k)\right\|
\leq \sum_{k=1}^j |\alpha_k| \|\psi_k^L-\psi_k\| \\
&\leq \sum_{k=1}^j |\alpha_k| c_{1}(k)^{1/2} e^{-aL} \leq \tilde{c}_j e^{-aL}
\end{align*}
so that $1- \tilde{c}_j e^{-aL}\leq \|\tilde{\phi}\|\leq 1+ \tilde{c}_j e^{-aL}$.
Now, from Lemma~\ref{lem62}, we get
\[
    \frak{a}^1(\psi_k^L-\psi_k,\psi_k^L-\psi_k)\leq c_2(k) e^{-2aL}.
\]
Then, from the fact that $\sqrt{\frak{a}^1}$ is a norm in its domain, an application of the triangle inequality yields
\[
   \frak{a}^1(v-\tilde{\phi},v-\tilde{\phi}) ^{1/2}  \leq \tilde{\tilde{c}}_j e^{-aL}.
\]
Here $\tilde{c}_j$ and $\tilde{\tilde{c}}_j$ are independent of the $\alpha_k$. Thus
\[
 \frak{a}^1(v-\phi,v-\phi) ^{1/2} \leq c_j e^{-aL}
\]
as needed.
\end{proof}

We now show that the eigenvalues for a finite $L$ are exponentially close to those for the infinite $L$.

\begin{thm}  \label{lem63}
 Let $\lambda_j$ be the $j$th eigenvalues of $H$ and $\lambda_j^L$ be the $j$th eigenvalue of $H_L$. For $L$ sufficiently large, there exists a constant $c_j>0$ independent of $L$ such that 
\[
                     \lambda_j < \lambda_j^{L} < \lambda_j+c_j\mathrm{e}^{-2a L}.
\]
\end{thm}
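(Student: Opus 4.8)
The plan is to prove the two inequalities separately, each through the min-max characterisation of the eigenvalues. For the left-hand inequality $\lambda_j<\lambda_j^L$, note first that extension by zero embeds $\dom(\frak{a}^{1,L})=W^{1,2}_0(-L,L)$ into $\dom(\frak{a}^1)$ with $\frak{a}^{1,L}(u,u)=\frak{a}^{1}(u,u)$ on such functions; hence the min-max principle gives $\lambda_j^L\geq\lambda_j$ immediately. Strictness is the only delicate point. If $\lambda_j^L=\lambda_j$, then the span $V_j$ of the first $j$ Dirichlet eigenfunctions of $H_L$, extended by zero, is a $j$-dimensional subspace of $\dom(\frak{a}^1)$ on which $\frak{a}^1\leq\lambda_j\|\cdot\|^2$. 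Intersecting $V_j$ with the $L^2$-orthogonal complement of $\x{Span}\{\psi_1,\dots,\psi_{j-1}\}$ (codimension $j-1$) produces a non-zero $u\in V_j$ with $u\perp\psi_1,\dots,\psi_{j-1}$, and the spectral decomposition of $H$ together with the simplicity of $\lambda_j$ forces $u$ to be a multiple of $\psi_j$. But $u$ is supported in $[-L,L]$, whereas $\psi_j$ solves $-\psi_j''+(V-\lambda_j)\psi_j=0$ on $\mathbb{R}$ with $\psi_j\in C^2$, so it cannot vanish on a half-line (Picard--Lindel\"of uniqueness from the endpoint $x=\pm L$ would force $\psi_j\equiv0$). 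This contradiction yields $\lambda_j^L>\lambda_j$.

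For the right-hand inequality I would construct an explicit $j$-dimensional trial subspace of $W^{1,2}_0(-L,L)$ from truncated eigenfunctions. Fix a smooth cutoff $\chi_L$ with $0\leq\chi_L\leq1$, $\chi_L\equiv1$ on $[-L+1,L-1]$, $\x{supp}\chi_L\subset(-L,L)$, and $|\chi_L'|\leq C$ with $C$ independent of $L$, and set $\mathcal{M}_L=\x{Span}\{\chi_L\psi_k\}_{k=1}^j$. By Lemma~\ref{lem61} the Gram matrix of $\{\chi_L\psi_k\}$ equals $I+O(\mathrm{e}^{-2aL})$, so $\dim\mathcal{M}_L=j$ once $L$ is large. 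For $v=\chi_L\tilde\phi$ with $\tilde\phi=\sum_{k=1}^j\alpha_k\psi_k$, $\sum_k|\alpha_k|^2=1$, the key device is the localisation identity
\[
\frak{a}^{1,L}(v,v)=\frak{a}^{1}(\chi_L\tilde\phi,\chi_L\tilde\phi)=\langle\chi_L^2 H\tilde\phi,\tilde\phi\rangle+\|\chi_L'\tilde\phi\|^2,
\]
valid because $\tilde\phi\in\dom(H)$; its point is that the (possibly large) potential never appears on its own in the error terms, since $H\tilde\phi=\sum_k\lambda_k\alpha_k\psi_k$. Writing $\langle\chi_L^2 H\tilde\phi,\tilde\phi\rangle=\sum_k\lambda_k|\alpha_k|^2-\langle(1-\chi_L^2)H\tilde\phi,\tilde\phi\rangle$ and estimating the correction, $\|\chi_L'\tilde\phi\|^2$, and $1-\|v\|^2$ by integrals over $\{|x|\geq L-1\}$ using Lemma~\ref{lem61}, all three are $O(\mathrm{e}^{-2aL})$ uniformly in the $\alpha_k$; since $\lambda_k\leq\lambda_j$ this gives $\frak{a}^{1,L}(v,v)/\|v\|^2\leq\lambda_j+c_j\mathrm{e}^{-2aL}$ for every $v\in\mathcal{M}_L$. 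The min-max principle then yields $\lambda_j^L\leq\max_{0\neq v\in\mathcal{M}_L}\frak{a}^{1,L}(v,v)/\|v\|^2\leq\lambda_j+c_j\mathrm{e}^{-2aL}$.

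I expect the main obstacle to be obtaining the \emph{sharp} exponent $\mathrm{e}^{-2aL}$ rather than a weaker $\mathrm{e}^{-aL}$. This is what forces the cutoff $\chi_L$ to be identically $1$ up to a \emph{fixed} distance from $\pm L$: a cutoff whose transition layer has width comparable to $L$, such as $h_{L/2}$, would only give an error of order $\mathrm{e}^{-aL}$, and it is precisely the localisation identity that prevents the growth of $V$ from spoiling the rate in the boundary layer. A secondary point needing care is the strictness $\lambda_j<\lambda_j^L$, which, unlike the non-strict inequality, genuinely relies on unique continuation for the Schr\"odinger ODE.
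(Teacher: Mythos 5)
Your proof is correct and follows the same min--max skeleton as the paper (lower bound by extending Dirichlet test functions by zero, upper bound by truncating eigenfunctions), but the execution differs in ways worth recording. For the upper bound the paper uses the cutoff $h_L$, whose transition layer has width $L$, compares $\lambda_j$ with $\lambda_j^{2L}$, and passes from $\mathfrak{a}^1(v,v)$ to $\mathfrak{a}^1(\phi,\phi)$ via the triangle inequality in the form norm (Lemma~\ref{lem63_max}, which in turn rests on Lemma~\ref{lem62}); you instead take a fixed-width cutoff and the localisation identity $\mathfrak{a}^1(\chi_L\tilde\phi,\chi_L\tilde\phi)=\langle\chi_L^2H\tilde\phi,\tilde\phi\rangle+\|\chi_L'\tilde\phi\|^2$. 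Your route is arguably tighter: the paper's triangle-inequality step leaves a cross term of size $\mathfrak{a}^1(\phi,\phi)^{1/2}\,\mathfrak{a}^1(v-\phi,v-\phi)^{1/2}=O(e^{-aL})$, so the claimed $e^{-2aL}$ only emerges after an additional cancellation the paper does not spell out, whereas your identity expresses every error term as an integral of products of eigenfunctions over $\{|x|\geq L-1\}$ and gives $O(e^{-2aL})$ at once, with no need for Lemma~\ref{lem62}. Your observation about the width of the transition layer is also on point: the paper's estimate is literally for $\lambda_j^{2L}$, so relabelling costs a factor $2$ in the exponent (harmless only because $a$ is not the sharp decay rate). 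Finally, you prove the \emph{strict} inequality $\lambda_j<\lambda_j^L$ via ODE unique continuation; the paper's displayed argument only yields $\lambda_j\leq\lambda_j^{2L}$, so here you are supplying a step the paper asserts but omits.
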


\begin{proof}   Let $\widehat{D}_L \subset \dom(\frak{a}^{1,\infty})$ be
\[
 \widehat{D}_L=\left\{u:\mathbb{R}\rightarrow \mathbb{C}~:~ u\!\upharpoonright_{[-L,L]}\in \dom(\frak{a}^{1,L})  \text{ and } u\!\upharpoonright_{(-\infty,-L]\cup[L,\infty)}=0\right\}.
\]
Then
\begin{align*}
 \lambda_j &=\min_{\substack {V\subset \dom(\frak{a}^{1,\infty})\\ \x{dim} V=j}}~ \max_{\substack {u\in V \\ u \neq 0}} ~ \frac{\frak{a}^{1,\infty}(u,u)}{\langle u,u\rangle}
\leq \min_{\substack {V\subset \widehat{D}_{2L}\\ \x{dim} V=j}}~ \max_{\substack {\tilde{u}\in V \\ \tilde{u} \neq 0}} ~ \frac{\frak{a}^{1,\infty}(\tilde{u},\tilde{u})}{\langle \tilde{u},\tilde{u}\rangle}\\
&= \min_{\substack {V\subset  \dom(\frak{a}^{1,2L})\\ \x{dim} V=j}}~ \max_{\substack {\tilde{u}\in V \\ \tilde{u} \neq 0}} ~ \frac{\frak{a}^{1,2L}(\tilde{u},\tilde{u})}{\langle \tilde{u},\tilde{u}\rangle}=\lambda_j^{2L}.
\end{align*}

Let $v_j\in \dom(\frak{a}^{1,2L})$ be the extremal vector of norm 1 such that
\[
       \frak{a}^{1}(v_j,v_j)=\frak{a}^{1,2L}(v_j,v_j)=\max_{\substack {v\in U_j(L) \\ v \neq 0}} ~ \frac{\frak{a}^{1,2L}(v,v)}{\langle v,v\rangle}
\]
The property~\ref{prop1} from Lemma~\ref{lem63_max}, implies that 
\[
 \lambda_j^{2L} \leq \frak{a}^{1}({v}_j,{v}_j).
\]
Then, according to the property~\ref{prop2} from the same lemma, there exists 
\[
    \phi \in \x{Span}\{\psi_l\}_{l=1}^j \qquad \|\phi\|=1
\]
 such that
\begin{align*}
 \lambda_j^{2L} \leq \frak{a}^{1}(\phi,\phi) +c_j\mathrm{e}^{-2aL}\leq \frak{a}^{1}(\psi_j,\psi_j) + c_j\mathrm{e}^{-2aL}=\lambda_j+c_j\mathrm{e}^{-2aL}.
\end{align*}
\end{proof}

Let $\Xi$ be an equidistant partition of $[-L,L]$ into $n$ sub-intervals $\mathcal{I}_l=[x_{l-1},x_l]$ of length $h=\frac{2L}{n}$. Let $\mathcal{L}=\mathcal{L}_L^h=V_h(k,r,\Xi)$ where 
\begin{equation}    \label{fespaces}
 V_h(k,r,\Xi)=\left\{v\in C^k(-L,L): \begin{aligned}& v \!\!\upharpoonright_{ \mathcal{I}_l} \in P_r(\mathcal{I}_l)  \qquad 1 \leq l \leq n \\ & v(-L)=v(L)=0 
 \end{aligned} \right\}
\end{equation}
is the finite element space generated by C$^k$-conforming elements of order $r$ subject to Dirichlet boundary conditions. Here we require $k\geq 1$ and $r \geq 3$, to ensure that $\mathcal{L} \subset \dom(\frak{a}^{2,L})$.

\begin{thm} \label{thm62}
Fix $j\in \mathbb{N}$.
There exist $L_0 > 0$ large enough and $h_0>0$ small enough, such that the following is satisfied. For $L>L_0$ and $h<h_0$, we can always find $u_j \in \mathcal{L}_L^h$ such that 
\begin{enumerate}
 \item \label{c1} $\langle u_j-\psi_j, u_j-\psi_j \rangle \leq \epsilon^0(h,L)$
  \item \label{c2} $\langle H(u_j-\psi_j), u_j-\psi_j \rangle \leq \epsilon^1(h,L)$
  \item \label{c3} $\langle H(u_j-\psi_j), H(u_j-\psi_j) \rangle \leq \epsilon^2(h,L)$
\end{enumerate}
where 
\begin{align*}
\epsilon^0(h,L)&=c_{10} \mathrm{e}^{-4aL}+c_{20} h^{2(r+1)} \\
\epsilon^1(h,L)&=c_{11} \mathrm{e}^{-2aL}+c_{21} h^{2r} \\
\epsilon^2(h,L)&=c_{12}\mathrm{e}^{-2aL}+c_{22}h^{2(r-1)}.
\end{align*}
The constants $c_{nk}>0$ are dependant on $j$, but are independent of $L$ or $h$.
\end{thm}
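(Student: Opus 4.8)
The plan is to construct the approximant $u_j$ in two stages: first project $\psi_j$ onto the finite-element space $\mathcal{L}_L^h$ by combining the truncation-via-cutoff from Lemma~\ref{lem63_max} with standard finite-element interpolation estimates, and then bound the three quantities $\epsilon^0,\epsilon^1,\epsilon^2$ by splitting each error into a ``length-of-segment'' contribution (the $\mathrm{e}^{-2aL}$ or $\mathrm{e}^{-4aL}$ terms) and a ``mesh-refinement'' contribution (the powers of $h$). Concretely, I would first replace $\psi_j$ by $\psi_j^L=h_L\psi_j$ as in Lemma~\ref{lem63_max}; by Lemmas~\ref{lem61} and \ref{lem62}, the tails satisfy $\|\psi_j-\psi_j^L\|^2\leq c\,\mathrm{e}^{-4aL}$ (using that $|\psi_j|\leq c\mathrm{e}^{-a|x|}$, so $\int_{|x|>L}|\psi_j|^2\leq c\mathrm{e}^{-2a\cdot 2L}$ — note the doubling of the exponent that produces the $4aL$ in $\epsilon^0$), while $\frak{a}^1(\psi_j-\psi_j^L,\psi_j-\psi_j^L)\leq c\,\mathrm{e}^{-2aL}$ and $\|H(\psi_j-\psi_j^L)\|^2\leq c\,\mathrm{e}^{-2aL}$, the last of these requiring that one control $\|(\psi_j^L)''\|_{L^2(|x|>L)}$ using the product rule $(h_L\psi_j)''=h_L''\psi_j+2h_L'\psi_j'+h_L\psi_j''$ together with the explicit bounds on $h_L,h_L',h_L''$ and the exponential decay of $\psi_j,\psi_j',\psi_j''$.

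For the second stage, since $\psi_j^L$ is supported in $[-2L,2L]$, I would actually work on a segment of length comparable to $L$ (the statement is for the space on $[-L,L]$, so I would instead apply Lemma~\ref{lem63_max} with $L$ replaced by $L/2$, or equivalently absorb constants; the bookkeeping here is routine). Let $u_j=\Pi_h\psi_j^L$ be the quasi-interpolant into $V_h(k,r,\Xi)$. The classical finite-element approximation theory for $C^k$-conforming elements of order $r\geq 3$ gives, for a smooth compactly supported function $w$,
\[
\|w-\Pi_h w\|_{L^2}\leq c\,h^{r+1}\|w^{(r+1)}\|_{L^2}, \quad \|w-\Pi_h w\|_{H^1}\leq c\,h^{r}\|w^{(r+1)}\|_{L^2}, \quad \|w-\Pi_h w\|_{H^2}\leq c\,h^{r-1}\|w^{(r+1)}\|_{L^2},
\]
and since $\psi_j^L=h_L\psi_j$ is $C^\infty$ with all derivatives bounded uniformly in $L$ on its support (the potential and the eigenfunction being smooth, and $h_L$ having $L$-independent $C^\infty$ norm after rescaling), the right-hand sides are $c\,h^{r+1}$, $c\,h^{r}$, $c\,h^{r-1}$ respectively. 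Squaring and combining with the tail estimates via the triangle inequality, $\langle u_j-\psi_j,u_j-\psi_j\rangle\leq 2\|u_j-\psi_j^L\|^2+2\|\psi_j^L-\psi_j\|^2\leq c_{20}h^{2(r+1)}+c_{10}\mathrm{e}^{-4aL}$, and similarly for the form values using $\langle H\cdot,\cdot\rangle=\frak{a}^1(\cdot,\cdot)$ and $\langle H\cdot,H\cdot\rangle=\|H\cdot\|^2$; here I would use the identity $Hw=-w''+Vw$ to pass between $H^2$-type norms and $L^2$-norms of $w''$ and $Vw$, using $|V(x)|\leq k_b\mathrm{e}^{b|x|}$ on the bounded support to keep all constants finite and $L,h$-independent.

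The main obstacle I anticipate is not any single estimate but the clean bookkeeping of which exponential rate appears where, and ensuring genuine $L$-independence of the interpolation constants. The subtle point is that $\psi_j^L$ lives on an interval of length $\sim L$ that grows, yet the $h^{\cdot}$-estimates must have constants independent of $L$: this works because the derivatives of $h_L$ scale like negative powers of $L$ (hence are bounded, not growing) and $\psi_j$ together with all its derivatives decays exponentially, so $\|(\psi_j^L)^{(r+1)}\|_{L^2(\mathbb{R})}$ is bounded by an $L$-independent constant — the growth of the domain of integration is killed by the exponential decay of the integrand. One must also verify $u_j=\Pi_h\psi_j^L\in\mathcal{L}_L^h$, i.e.\ that the quasi-interpolant respects the homogeneous Dirichlet conditions at $\pm L$; this holds because $\psi_j^L$ and its relevant derivatives vanish at $\pm L$ once $2L$ exceeds the support, provided we have arranged the cutoff to be flat ($\equiv 1$) on $[-L,L]$ and used the interval $[-L,L]$ with the cutoff at scale $L/2$, so that $\psi_j^L$ already vanishes to high order at $\pm L$. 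The exponent $r-1$ in $\epsilon^2$ is dictated by the $H^2$-interpolation estimate, which is available precisely because we demand $C^k$-conformity with $k\geq 1$ and $r\geq 3$, so that $\mathcal{L}\subset\dom(\frak{a}^{2,L})$ and the second-derivative interpolation error is meaningful.
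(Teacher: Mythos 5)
Your construction (truncate $\psi_j$, interpolate into the finite\--element space, and split each of the three errors into an exponentially small tail controlled by Lemmas~\ref{lem61} and~\ref{lem62} plus an interpolation error in powers of $h$ via Ciarlet's theorem) is essentially the paper's own proof, which simply takes $u_j$ to be the interpolate of $\psi_j$ on $[-L,L]$; your additional cutoff step only serves to enforce the Dirichlet condition at $\pm L$, a point the paper glosses over. One small correction: the bound $|\psi_j|\leq c\,\mathrm{e}^{-a|x|}$ by itself yields only $\int_{|x|>L}|\psi_j|^2\lesssim \mathrm{e}^{-2aL}$, so to reach the stated $\mathrm{e}^{-4aL}$ in $\epsilon^0$ you must invoke the un\--halved decay rate $|\psi_j|\leq c\,\mathrm{e}^{-2a|x|}$ available from the proof of Lemma~\ref{lem61} (where $a=\tilde{a}/2$), exactly as the paper implicitly does.
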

\begin{proof}
Below we repeatedly use the estimate
\[
    \|v-v_h\|_{H^p(-L,L)} \leq \tilde{c} h^{r+1-p}
\]
where $v_h\in V_h(k,r,\Xi)$ is the interpolate of $v\in C^k\cap H_{\Xi}^{r+1}(-L,L)$.
See \cite[Theorem~3.1.6]{1978Ciarlet}. We set $u=\psi_h$.

For the property~\ref{c1}, observe that
 \begin{align*}
  \langle u-\psi, u-\psi \rangle&=\int_{-\infty}^{\infty} |u-\psi|^2\dx \\
&=\int_{-\infty}^{-L}\!\!\!+\int_{L}^{\infty} |u-\psi|^2\dx +\int_{-L}^{L}|u-\psi|^2\dx \\
&=\int_{-\infty}^{-L}\!\!\!+\int_{L}^{\infty} |\psi|^2\dx +\int_{-L}^{L}|u-\psi|^2\dx \\
&\leq c_{10} e^{-4aL}+c_{20} h^{2(r+1)}.
 \end{align*}

For the property~\ref{c2}, observe that
\begin{align*}
&\langle H(u-\psi), u-\psi \rangle\leq \int_{-\infty}^{\infty} \left(|u'-\psi'|^2+|V(x)||u-\psi|^2\right)\dx \\
&=\int_{-\infty}^{-L}\!\!\!+\int_{L}^{\infty}|\psi'|^2\dx +\int_{-\infty}^{-L}\!\!\!+\int_{L}^{\infty}|V(x)||\psi|^2\dx 
+ \int_{-L}^{L}|u'-\psi'|^2\dx  \\
& \qquad +\int_{-L}^{L}|V(x)||u-\psi|^2\dx \\
&\leq c_{31} \mathrm{e}^{-2aL}+c_{32} \mathrm{e}^{- \frac72aL}+c_{33} h^{2r} +c_{34} h^{2(r+1)}.
\end{align*}

For the property~\ref{c3}, observe that
\begin{align*}
&\langle H(u-\psi),H(u-\psi)\rangle\\
&\leq \int_{-\infty}^{\infty}|u''-\psi''|^2\dx +\int_{-\infty}^{\infty}|V(x)|^2|u-\psi|^2\dx \\
&\qquad +2\left[\left(\int_{-\infty}^{\infty}|V(x)|^2|u-\psi|^2\dx  \right)^{1/2} \left(\int_{-\infty}^{\infty}|u''-\psi''|^2\dx \right)^{1/2}\right]\\
&\leq\left( c_{41}\mathrm{e}^{-2aL}+c_{42}h^{2(r-1)}\right)+\left( c_{43}\mathrm{e}^{-3aL}+c_{44}h^{2(r+1)}\right)\\
&\qquad +\left[\left(c_{45}\mathrm{e}^{-3aL}+c_{46}h^{2(r+1)}\right)^{1/2} \left( c_{47}\mathrm{e}^{-2aL}+c_{48}h^{2(r-1)} \right)^{1/2}\right]\\
&= c_{41}\mathrm{e}^{-2aL}+c_{42}h^{2(r-1)}+ O(\mathrm{e}^{-3aL})+O(h^{2r}).
\end{align*}
Here we employ the Cauchy-Schwarz inequality and Newton's Generalised Binomial Theorem, as well as 
lemmas~\ref{lem61} and \ref{lem62}.
\end{proof}

By virtue of Theorem~\ref{thm62}, the hypothesis of Corollary~\ref{convquadgeneral}
is satisfied for $A=H$ and $\mathcal{L}=\mathcal{L}_L^h$, whenever $h$ is small enough and $L$ is large enough. Therefore spectral exactness is guaranteed in this setting. We summarise the crucial statement in the following corollary.

Note that the upper bound for $d$ below is the distance from $\lambda_j$ to the rest of the spectrum of $H$.

\begin{col} \label{cor62}
Fix $j\in\mathbb{N}$ and
\[
0<d<\min\{ \lambda_j-\lambda_{j-1},\lambda_{j+1}-\lambda_j  \}.
\]
There exists $L_0>0$, $h_0>0$ and $c>0$ (only dependant on $j$, $d$ and $H$) ensuring the following.  
For all $L>L_0$ and $0<h<h_0$,
\[
    \spec_2(H,\mathcal{L}_L^h)\cap \mathbb{B}\left(d,\lambda_j \right)\subset \mathbb{B}(c (h^{\frac{r-1}{2}}+e^{-\frac{aL}{2}}),\lambda_j).
\]
\end{col}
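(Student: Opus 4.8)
The plan is to deduce Corollary~\ref{cor62} by feeding the finite element spaces $\mathcal{L}_L^h$ into Corollary~\ref{convquadgeneral}, using the approximation bounds of Theorem~\ref{thm62}. Since $\spec(H)$ is discrete with all eigenvalues simple, I would apply Corollary~\ref{convquadgeneral} with $A=H$, $\lambda=\lambda_j$, the one-element orthonormal set $\mathcal{E}=\{\psi_j\}\subset\ker(H-\lambda_j)$, and $\lambda_{\pm}=\lambda_j\pm d$. The hypothesis $0<d<\min\{\lambda_j-\lambda_{j-1},\lambda_{j+1}-\lambda_j\}$ guarantees $\lambda_-<\lambda_j<\lambda_+$ and $[\lambda_-,\lambda_+]\cap\spec(H)=\{\lambda_j\}$; Corollary~\ref{convquadgeneral} then supplies constants $K>0$ and $\varepsilon_0>0$ depending only on $j$, $d$ and $H$. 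Since $\tfrac{\lambda_+-\lambda_-}{2}=d$ and $\tfrac{\lambda_++\lambda_-}{2}=\lambda_j$, the disks in its hypothesis and conclusion are exactly $\mathbb{B}(d,\lambda_j)$ and $\mathbb{B}(K\varepsilon^{1/2},\lambda_j)$.

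Next I would check the hypothesis of Corollary~\ref{convquadgeneral} for $\mathcal{L}=\mathcal{L}_L^h$. By Theorem~\ref{thm62} (whose applicability rests on the standing assumptions $k\geq1$, $r\geq3$ giving $\mathcal{L}_L^h\subset\dom(\frak{a}^{2,L})\subset\dom(H)$, so that $\spec_2(H,\mathcal{L}_L^h)$ is well defined and $H(u_j-\psi_j)\in L^2$), for $L$ large and $h$ small there is $u_j\in\mathcal{L}_L^h$ with $\|u_j-\psi_j\|^2\leq\epsilon^0(h,L)$ (property~\ref{c1}) and $\|H(u_j-\psi_j)\|^2\leq\epsilon^2(h,L)$ (property~\ref{c3}); property~\ref{c2} is not needed for this deduction. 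Applying $\sqrt{s+t}\leq\sqrt{s}+\sqrt{t}$ to $\epsilon^0$ and $\epsilon^2$, and discarding the faster-decaying contributions ($\mathrm{e}^{-2aL}\leq\mathrm{e}^{-aL}$, and $h^{r+1}\leq h^{r-1}$ for $h<1$), I would get
\[
\min_{u\in\mathcal{L}_L^h}\bigl(\|u-\psi_j\|+\|H(u-\psi_j)\|\bigr)\leq\|u_j-\psi_j\|+\|H(u_j-\psi_j)\|\leq C_j\bigl(\mathrm{e}^{-aL}+h^{r-1}\bigr)=:\varepsilon,
\]
with $C_j$ assembled from the $c_{nk}$ of Theorem~\ref{thm62} and the decay rate $a$. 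Enlarging $L_0$ and shrinking $h_0$ so that $\varepsilon<\varepsilon_0$, the hypothesis of Corollary~\ref{convquadgeneral} is satisfied, the maximum over the singleton $\mathcal{E}$ being trivial.

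Corollary~\ref{convquadgeneral} then yields $\spec_2(H,\mathcal{L}_L^h)\cap\mathbb{B}(d,\lambda_j)\subset\mathbb{B}(K\varepsilon^{1/2},\lambda_j)$, and one further use of subadditivity of the square root replaces $\varepsilon^{1/2}$ by $C_j^{1/2}(\mathrm{e}^{-aL/2}+h^{(r-1)/2})$; setting $c=KC_j^{1/2}$ gives the claimed inclusion. I do not anticipate a genuine obstacle, as the argument is essentially bookkeeping. The points that take a little care are: matching the two kinds of estimate supplied by Theorem~\ref{thm62} to the single combined hypothesis of Corollary~\ref{convquadgeneral} (here the term $\sqrt{\epsilon^2}$ arising from $\|H(u_j-\psi_j)\|$, rather than $\sqrt{\epsilon^0}$, is the one that dominates $\varepsilon$, which is why the final exponents are half those in $\epsilon^2$); verifying that all constants depend only on $j$, $d$ and $H$, and in particular that the auxiliary shift used inside the proof of Corollary~\ref{convquadgeneral} can be chosen from $d$ and $\spec(H)$ alone, so its influence on $K$ and $\varepsilon_0$ is harmless once $j$ and $d$ are fixed; and confirming $\mathcal{L}_L^h\subset\dom(H)$, so that the abstract corollary applies verbatim.
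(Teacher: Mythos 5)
Your proposal is correct and follows essentially the same route as the paper: take the $u_j$ supplied by Theorem~\ref{thm62}, bound $\|u_j-\psi_j\|+\|H(u_j-\psi_j)\|$ by $\epsilon^0(h,L)^{1/2}+\epsilon^2(h,L)^{1/2}$, and feed this into Corollary~\ref{convquadgeneral} with $\mathcal{E}=\{\psi_j\}$ and $\lambda_\pm=\lambda_j\pm d$. Your intermediate bound $C_j(\mathrm{e}^{-aL}+h^{r-1})$ is in fact the one consistent with the stated radius $c(h^{(r-1)/2}+\mathrm{e}^{-aL/2})$ after taking the square root; the paper's displayed $\tilde{c}(h^{r-1}+\mathrm{e}^{-aL/2})$ carries a harmless typo in the exponential.
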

\begin{proof}
Let $u_j\in\mathcal{L}_L^h$ be as in Theorem~\ref{thm62}.
Then
 \begin{align*}
\|u_j-\psi_j\|&+\|H(u_j-\psi_j)\| \\&=\langle u_j-\psi_j, u_j-\psi_j \rangle^{1/2}
+ \langle H(u_j-\psi_j),H(u_j-\psi_j) \rangle ^{1/2}\\
&\leq \epsilon^0(h,L)^{1/2}+\epsilon^2(h,L)^{1/2}\leq \tilde{c} (h^{r-1}+e^{-\frac{aL}{2}}).
\end{align*}
Hence Corollary~\ref{convquadgeneral} ensures the claimed statement. 
\end{proof}


 \section{Benchmarks examples} \label{numerical}

Corollary~\ref{cor62} shows that the quadratic method for one-dimensional Schr{\"o}dinger operators is convergent at a rate proportional  to $h^{\frac{r-1}{2}}$ for large enough $L$, when the trial spaces are chosen to be $\mathcal{L}=\mathcal{L}_L^h$. We now explore numerically the scope of this assertion on two well-known benchmark models, the harmonic and the anharmonic oscillators. 

Let $H^{\x{har}}=H$ for $V(x)=x^2$. Then 
\[
     \x{Spec} (H^{\x{har}}) = \{2j+1: j\in \mathbb{N}\}.
\]
Let $H^{\x{anh}}=H$ for $V(x)=x^4$.  In this case a simple analytic expression for the eigenvalues is not known. All the numerical calculations shown below were performed on fixed $L=6$ and $r=3$. The trial subspaces $\mathcal{L}_6^h$ were assembled from a basis of C$^1$-conforming Hermite elements. This ensures $\mathcal{L}_6^h\subset \dom(H)$. In correspondence with \eqref{fespaces}, everywhere in this section $n$ denotes the number of subdivisions of  the segment $[-6,6]$.
 
A routine Galerkin approximation ensures upper bounds for the eigenvalues of both $H^{\x{har}}$ and $H^{\x{anh}}$. For reference, in Table~\ref{fig4} we have included the numerical estimation of these bounds for the first five eigenvalues of the corresponding operators.

\begin{table}[b]
\centerline{\begin{tabular}{ |c|r| r| }
  \hline
  $j$ &  $\lambda_j$ harmonic &  $\lambda_j$ anharmonic \\ \hline
  1 & $1.000000000000174$ & $1.060362090484841$  \\ \hline
  2 & $3.000000000001666$ &  $3.799673029810648$ \\ \hline
  3 & $5.000000000013855$ & $7.455697938053159$ \\ \hline
  4 & $7.000000000181337$ & $11.644745511679762$ \\ \hline
  5 & $9.000000002611037$ & $16.261826019859956$ \\ \hline
\end{tabular}}
  \caption{Approximation the first five eigenvalues of $H_6^{\x{har}}$ and $H_6^{\x{anh}}$ by means of the Galerkin method on cubic Hermite elements fixing $n=400$.\label{fig4}}
\end{table}

Below we examine the explicit bounds given by Theorem~\ref{thmshar} and their convergence given by Corollary~\ref{cor62}.  We compute $\spec_2(H_6,\mathcal{L}_{6}^h)$ in the same fashion as described in Section~\ref{quasec}.  All the coefficients of the matrices $\mathbf{A}_0$, $\mathbf{A}_1$ and $\mathbf{A}_2$, were found 
analytically. The precise expressions for the entries of these matrices are  found in the Appendix.

Figures~\ref{fig5_1} and \ref{fig5_2} show $\spec_2(H^{\x{har}}_6,\mathcal{L}_{6}^h)$ and  $\spec_2(H^{\x{anh}}_6,\mathcal{L}_{6}^h)$, for three different values of $n$. Clearly in both cases the second order spectra are globally approaching the spectrum. 

 \begin{figure}[H]
\centering
\includegraphics[scale=0.4]{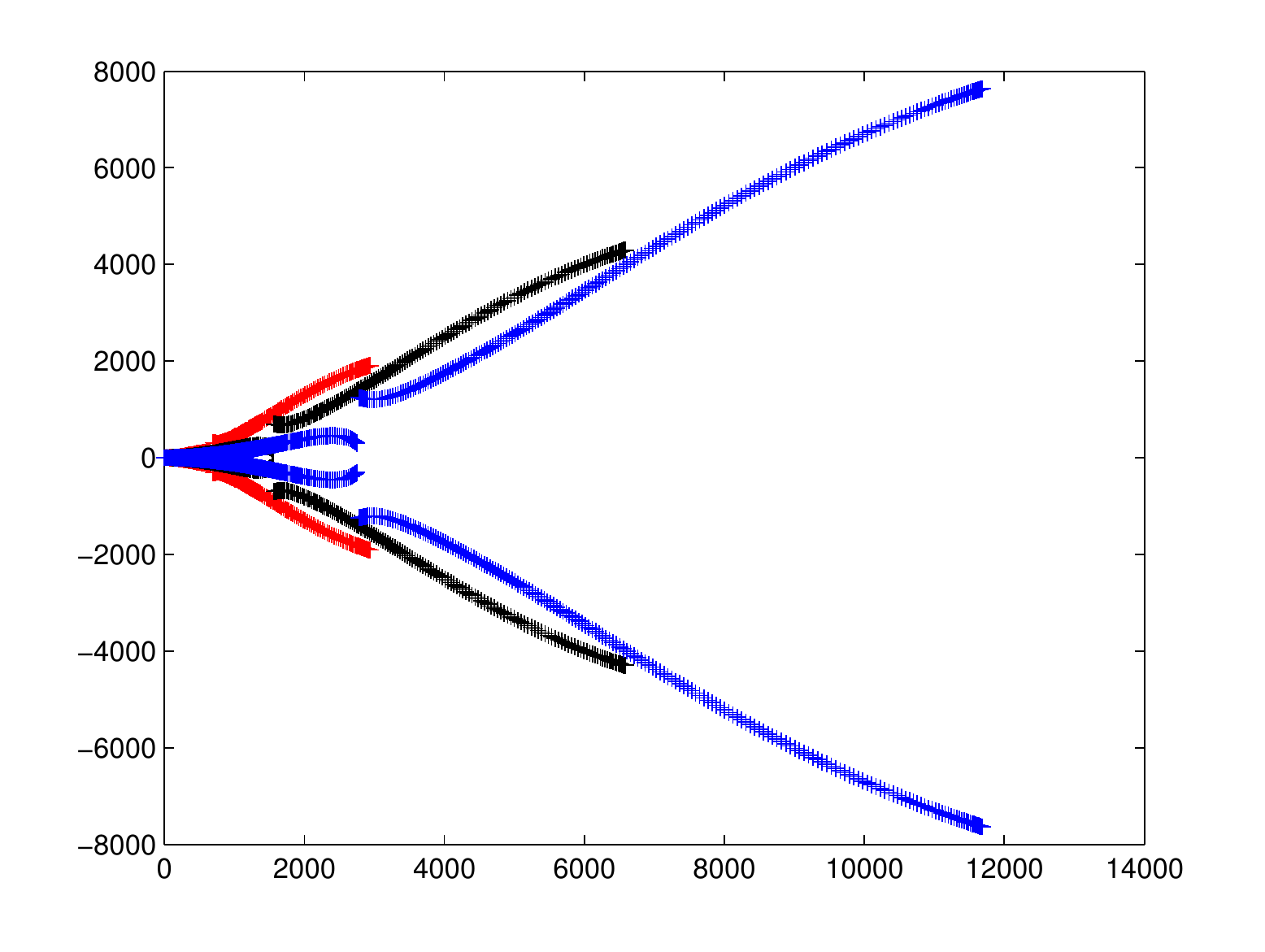} \hspace{-5mm}
\includegraphics[scale=0.4]{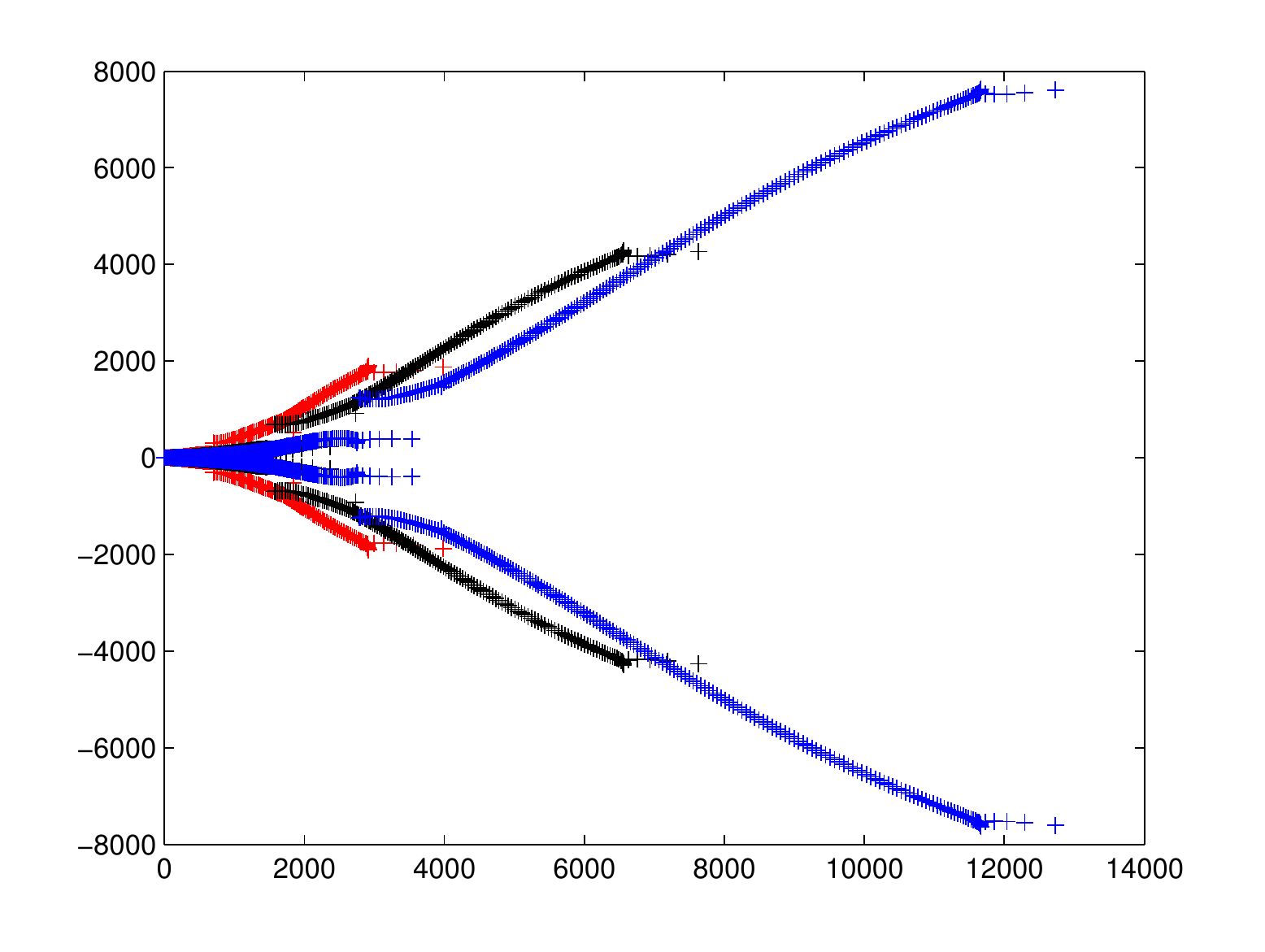}
\caption{Second order spectra of $H^{\x{har}}_6$ (left) and $H^{\x{anh}}_6$ (right) relative to $\mathcal{L}_6^h$. Here $n=100$ (red), $n=150$ (black) and $n=200$ (blue). \label{fig5_1}}
\end{figure}
\begin{figure}[H]
\centering
\includegraphics[scale=0.4]{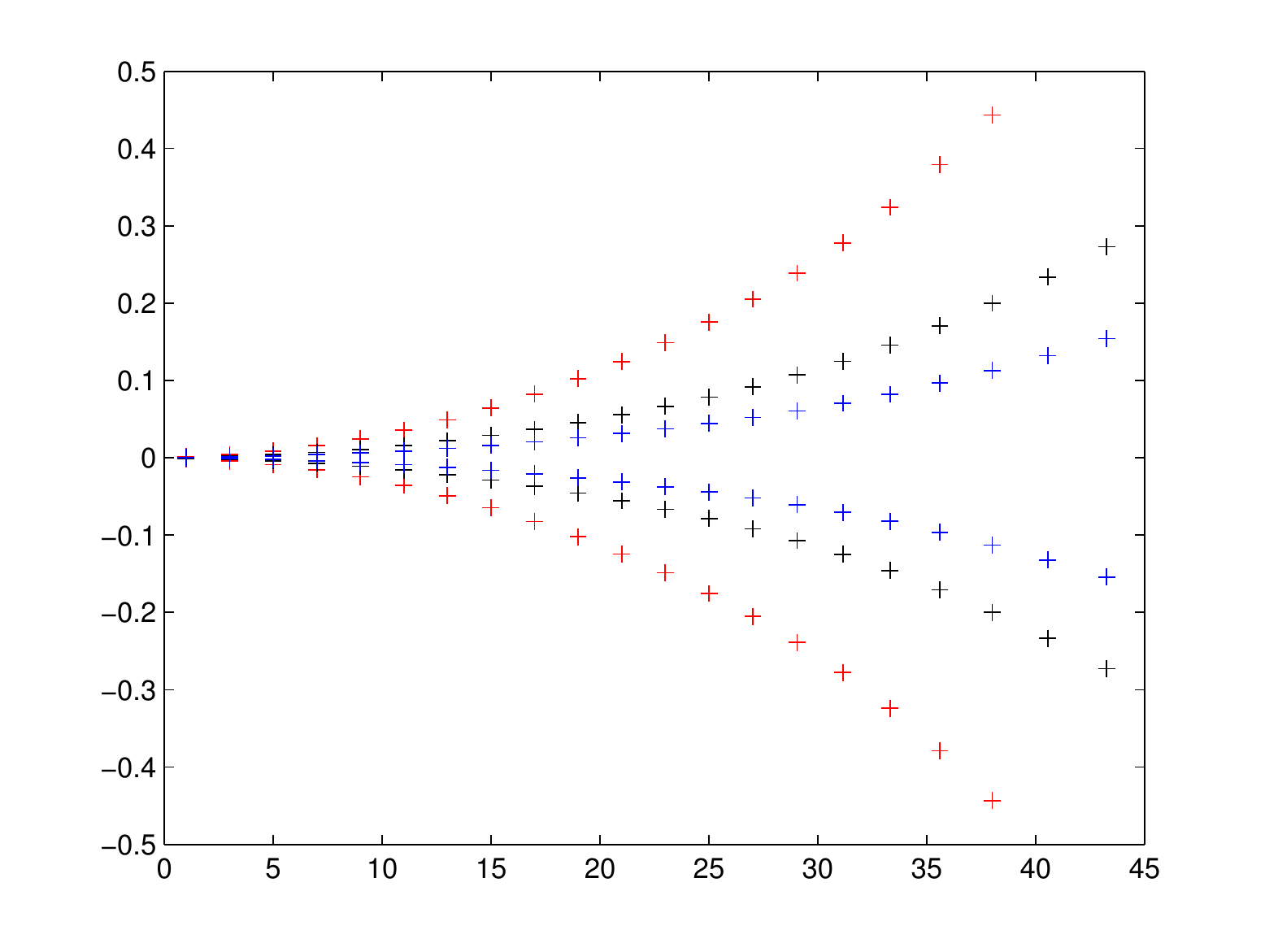} \hspace{-5mm}
\includegraphics[scale=0.4]{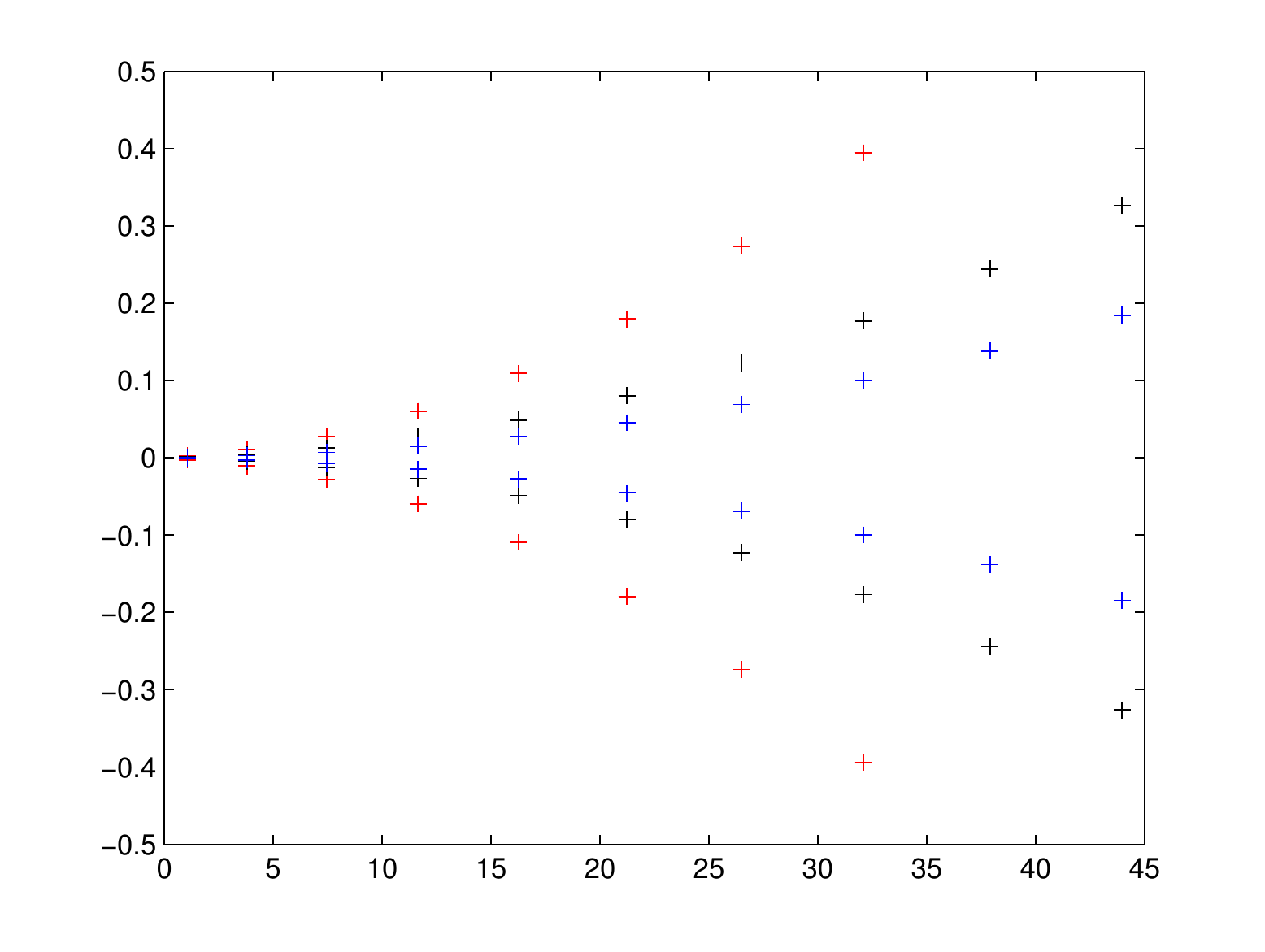}
\caption{Zoom image corresponding to  Figure~\ref{fig5_1} on a thin box near the origin. \label{fig5_2}}
\end{figure}
\begin{figure}[H]
\centering
\includegraphics[width=4.5cm]{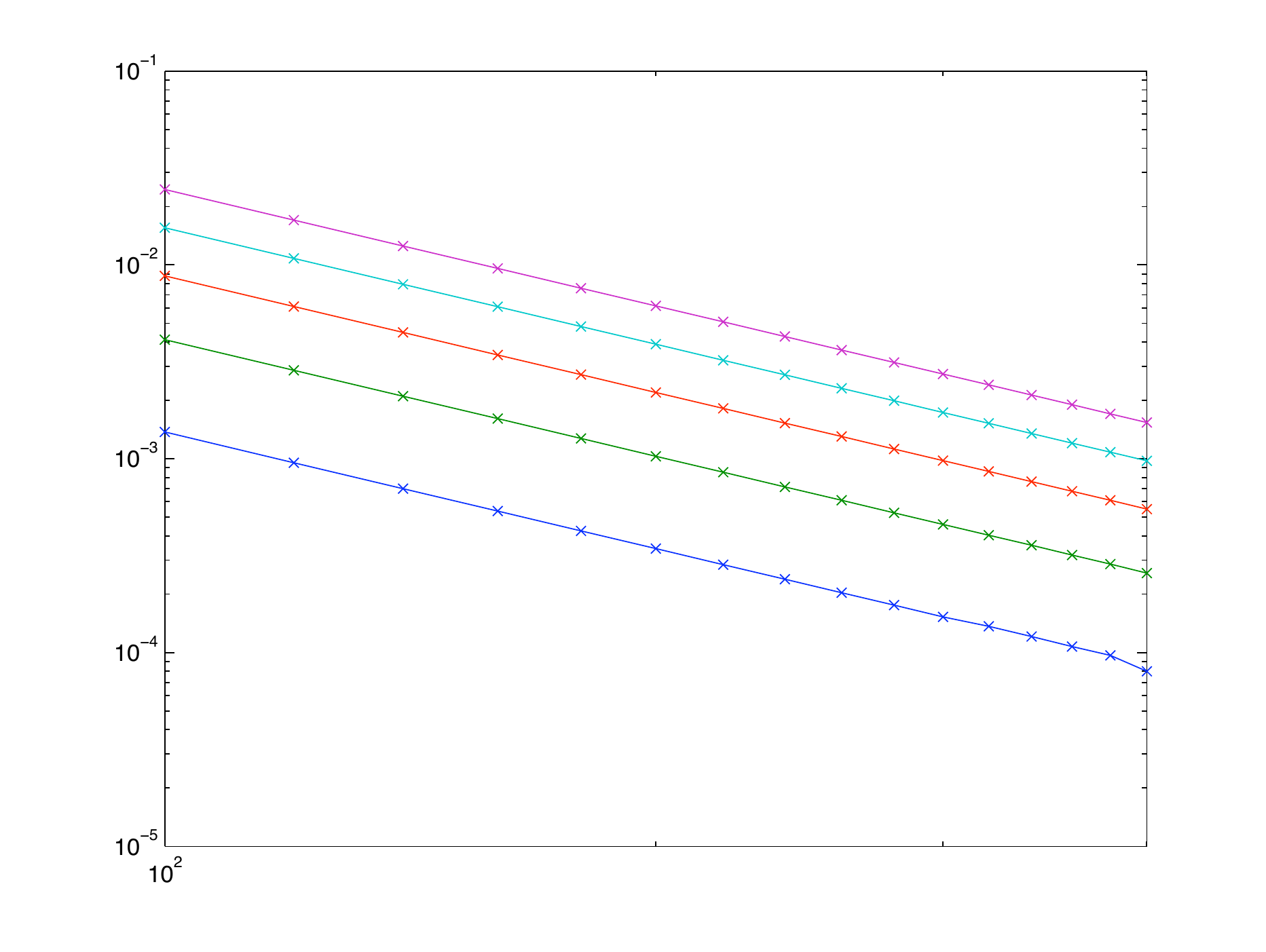} \hspace{-5mm}
\includegraphics[width=4.5cm]{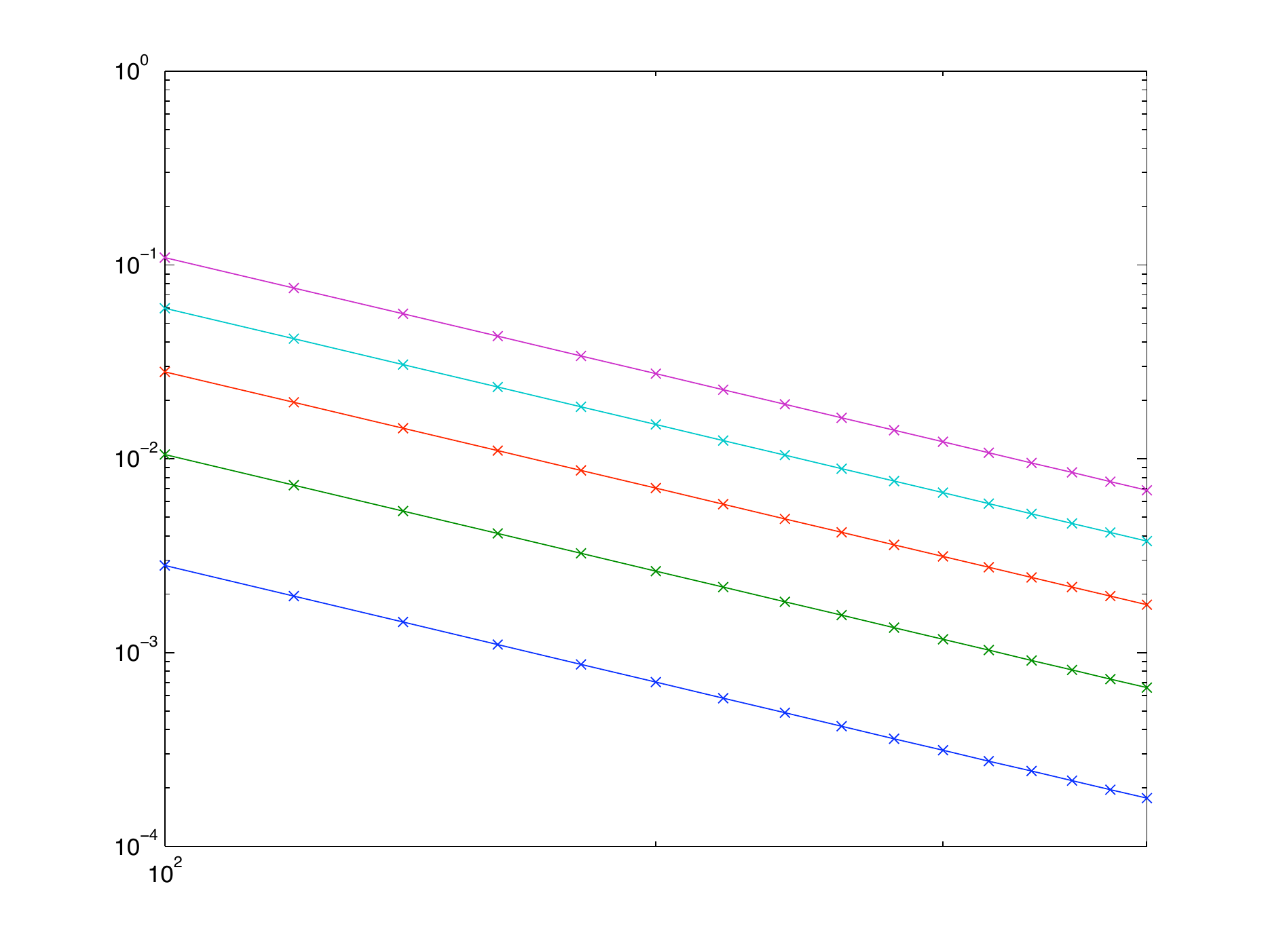} \hspace{-5mm}
\includegraphics[width=2cm]{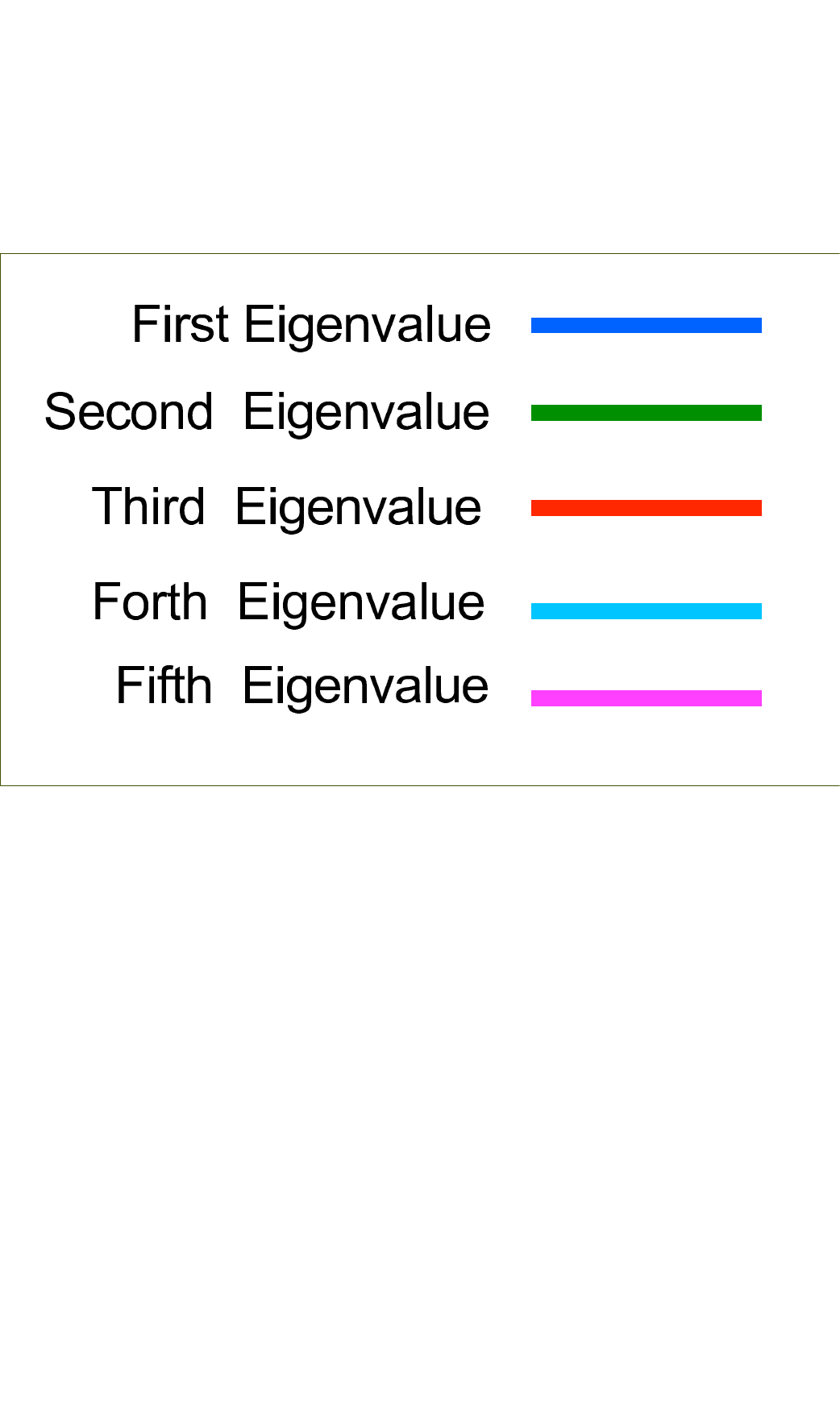}
\caption{Loglog plot of the length of the enclosure $r(j,n)$ for $H_6^{\x{har}}$ (left) and $H_6^{\x{anh}}$ (right),
 as $n$ increases. The slopes are all close to the value 2. \label{fig6}}
\end{figure}

\begin{table}[b]
\centerline{\begin{tabular}{ |c|c|c| }
  \hline
  $j$ &   harmonic &  anharmonic \\ \hline
  1 & $^{1.00009}_{0.99991}$ & $1.060^{54}_{18}$  \\ \hline
  2 & $^{3.00026}_{2.99974}$ & $3.79^{894}_{351}$ \\ \hline
  3 & $^{5.00055}_{4.99945}$ & $7.45^{747}_{393}$ \\ \hline
  4 & $^{7.00098}_{6.99902}$ & $11.64^{48}_{09}$ \\ \hline
  5 & $^{9.00154}_{8.99846}$ & $16.2^{688}_{549}$ \\ \hline
  \end{tabular}}
\caption{Approximating enclosures for first five eigenvalues of $H_6^{\x{har}}$ and $H_6^{\x{anh}}$  with $n=400$.
Here $\lambda_{\mathrm{low}}$ is the lower bound of the segment enclosing $\lambda$ and $\lambda^{\mathrm{up}}$ the upper bound. \label{table6}}
\end{table}

In Table \ref{table6} we show approximation of the first five eigenvalues of $H_6^{\x{har}}$ and $H_6^{\x{anh}}$ with $n=400$.  According to Theorem~\ref{thmshar} the numbers shown are certified upper and lower bounds for these eigenvalues.  

From the conclusion of Corollary~\ref{cor62}, the length of the intervals of enclosure for each one of these individual eigenvalue decreases at a rate proportional to $h^1$  as $h\to 0$. Note that this rate could not be verified in the past directly from the results reported in \cite[Theorem~3.4]{2010Boulton}, because $\mathcal{L}_L^h\not\subset \dom((H_L)^2)$. In Figure~\ref{fig6} we show plots in loglog scale of the number of nodes $n$ versus the exact residual 
\[
r(j,n)=(\lambda_j)^{\x{up}}-(\lambda_j)_{\x{low}}.
\]
 The slopes of the graphs are always close to the value $2$. Thus the actual rate of decrease of this residual seems to be proportional to $h^2$  as $h\to 0$. 

When the $n$ reaches a threshold $N_j$, the residual $r(j,n)$ stops decreasing. For $n>N_j$, the behaviour of  $r(j,n)$ becomes erratic. This is a consequence of rounding error taking over in the calculation of the conjugate pairs in the second order spectra. See Figure~\ref{fig7}.  These thresholds depend on the individual eigenvalues. In Table~\ref{table7} we show a heuristic prediction of the value of $N_j$ alongside the corresponding enclosure for $j=1,\ldots,5$.
 
\begin{figure}[H]
\centering
\includegraphics[width=4.5cm]{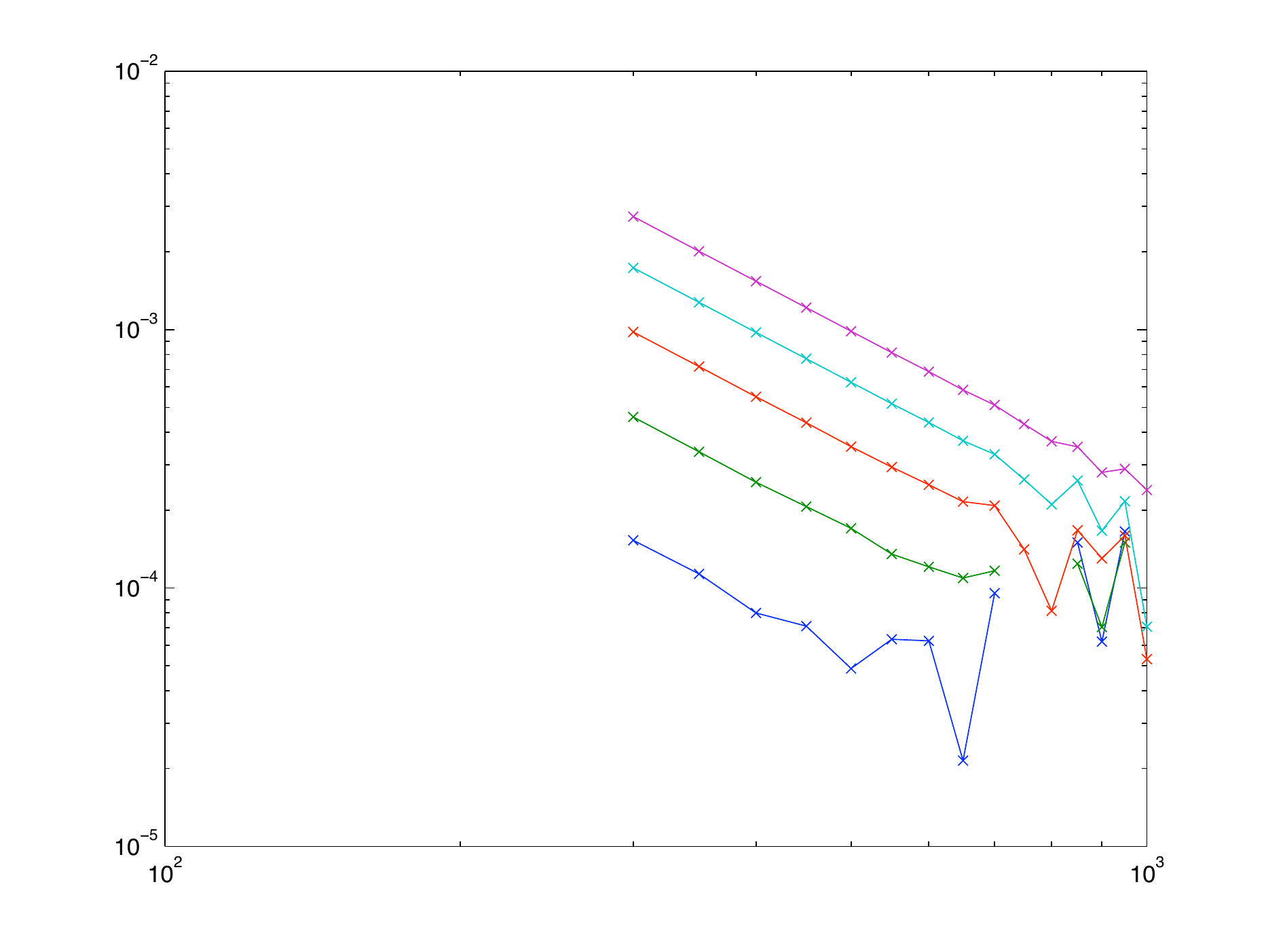}
\includegraphics[width=4.5cm]{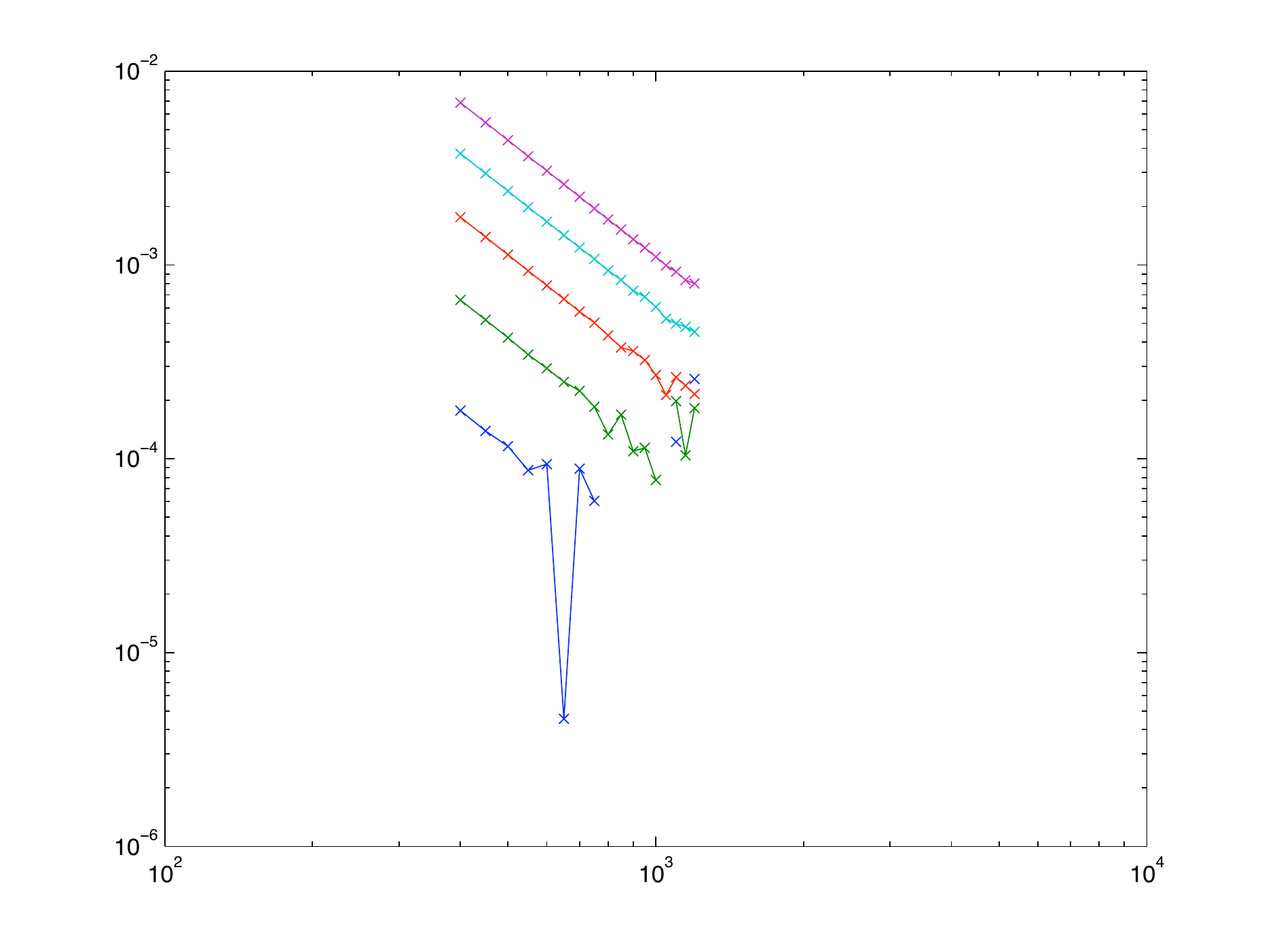}
\includegraphics[width=2cm]{legend7}
\caption{Loglog plot of the length of the enclosure $r(j,n)$ for $H_6^{\x{har}}$ (left) and $H_6^{\x{anh}}$ (right),
 as $n$ becomes very large and reaches $N_j$.    \label{fig7}}
\end{figure}
\begin{table}[H]
\centerline{\begin{tabular}{ |c|c|c||c|c| }
  \hline
  $j$ & $N_j$ &  harmonic & $N_j$ &  anharmonic \\ \hline
  1 &450 &$^{1.00008}_{0.99992}$ &550 & $1.060^{45}_{27}$  \\ \hline
  2 &550 &$^{3.00014}_{2.99986}$ &700 & $3.799^{90}_{44}$ \\ \hline
  3 &650 &$^{5.00022}_{4.99978}$ &850 & $7.45^{608}_{532}$ \\ \hline
  4 &700 &$^{7.00033}_{6.99967}$ &1050 &$11.64^{53}_{42}$  \\ \hline
  5 &800 &$^{9.00037}_{8.99963}$ &1150 & $16.26^{27}_{09}$ \\ \hline
  \end{tabular}}
\caption{Prediction of $N_j$ alongside with the corresponding enclosure.\label{table7}}
\end{table}


\section*{Appendix}

The numerical experiments shown in Section~\ref{numerical} are performed on trial spaces $\mathcal{L}_L^h$ defined as in \eqref{fespaces} and generated by Hermite elements of order $r=3$. The associated basis functions over two contiguous segments in the mesh are explicitly given by
\[ \pp_j(x) = \left\{ \begin{array}{clc}
\frac{-(x-x_{j-1})^2(x_{j-1}-3x_j+2x)}{(x_j-x_{j-1})^3} & \mbox{for} & x_{j-1}\leq x\leq x_j\\
\frac{(x-x_{j+1})^2(x_{j+1}-3x_j+2x)}{(x_{j+1}-x_j)^3} & \mbox{for}  & x_j\leq x\leq x_{j+1}\\
0 & \mbox{otherwise}
\end{array}\right.\]
and
\[ \qq_j(x) = \left\{ \begin{array}{clc}
\frac{(x-x_j)(x-x_{j-1})^2}{(x_j-x_{j-1})^2} & \mbox {for} & x_{j-1}\leq x\leq x_j\\
\frac{(x-x_j)(x-x_{j+1})^2}{(x_{j+1}-x_j)^2} & \mbox {for}  & x_j\leq x\leq x_{j+1}\\
0 & \mbox{otherwise}
\end{array}\right.\]
where $h=\frac{2L}{n}$, $-L=x_0$ and $x_n=L$. 
The matrices $\mathbf{A}_0$, $\mathbf{A}_1$ and $\mathbf{A}_2$, are banded matrices.  Set
\[
      \mathbf{B}^\ell_{jk}=\begin{bmatrix}
                       \mathfrak{a}^\ell(\pp_j,\pp_k) & \mathfrak{a}^\ell(\pp_j,\qq_k) \\
                       \mathfrak{a}^\ell(\qq_j,\pp_k) & \mathfrak{a}^\ell(\qq_j,\qq_k)
                      \end{bmatrix}
\qquad
\text{and} \qquad
\mathbf{B}^\ell=[  \mathbf{B}^\ell_{jk} ]_{jk=1}^n.
\]
Then
\[
    \mathbf{A}_\ell=\left[\begin{array}{c|c|c}
          \mathfrak{a}^\ell(\qq_0,\qq_0) & \cdots &  \mathfrak{a}^\ell(\qq_0,\qq_{n+1}) \\
          \hline & &\\
          \vdots & \qquad\scalebox{2}{$\mathbf{B}^\ell$} \qquad & \vdots \\ & & \\ \hline
    \mathfrak{a}^\ell(\qq_{n+1},\qq_0) & \cdots &  \mathfrak{a}^\ell(\qq_{n+1},\qq_{n+1})
    \end{array} \right].
\]
The entries of $\mathbf{A}_\ell$ are zero for all $|j-k|>1$. The remaining entries can be found explicitly from the tables below.

\begin{table}[H]
\center
\begin{tabular}{||cc||cc|cl|| }
\hline
 $b_j$ & $b_k$    & $\langle b_j,b_k \rangle$  &  $\langle b_j{'},b_k{'} \rangle$ &  $\langle b_j{''},b_k{''} \rangle$ & $\langle b_j,x^2b_k \rangle$ \\ \hline 
$\qq_0$ & $\qq_0$ & $\frac{h^3}{105}$ & $\frac{2h}{15}$ & $\frac{4}{h}$
& $\frac{h^5}{630}+\frac{h^4x_0}{140}+\frac{h^3x^2_0}{105}$  \\ \hline
$\qq_{n+1}$ & $\qq_{n+1}$ & $\frac{h^3}{105}$ & $\frac{2h}{15}$ & $\frac{4}{h}$
& $\frac{h^5}{630}-\frac{h^4x_n}{140}+\frac{h^3x^2_n}{105}$\\ \hline
$\qq_0$&$\pp_1$ & $\frac{13h^2}{420}$ &  $\frac{-1}{10}$  & $\frac{-6}{h^2}$
& $\frac{5h^4}{504}+\frac{13h^2x^2_0}{420}+\frac{h^3x_0}{30}$ \\ \hline 
$\qq_0$&$\qq_1$ & $\frac{-3h^3}{420}$ & $\frac{-h}{30}$&  $\frac{2}{h}$
& $\frac{-h^5}{504}-\frac{h^4x_0}{140}-\frac{h^3x^2_0}{140}$ \\ \hline 
$\qq_{n+1}$ & $\pp_{n}$ & $\frac{-13h^2}{420}$  &  $\frac{1}{10}$ & $\frac{6}{h^2}$ 
&  $\frac{-5h^4}{504}+\frac{h^3x_n}{30}-\frac{13h^2x^2_n}{420}$ \\ \hline 
$\qq_{n+1}$ & $\qq_{n}$ & $\frac{-3h^3}{420}$ & $\frac{-h}{30}$ & $\frac{2}{h}$ 
& $\frac{-h^5}{504}+\frac{h^4x_n}{140}-\frac{h^3x^2_n}{140}$\\ \hline 
$\pp_j$&$\pp_j$  & $\frac{52h}{70}$  &  $\frac{12h}{5}$& $\frac{24}{h^3}$ 
&  $\frac{19h^3}{315}+\frac{26hx^2_j}{35}$\\ \hline
$\qq_j$ & $\qq_j$ &  $\frac{8h^3}{420}$  &   $\frac{8h}{30}$  & $\frac{8}{h}$
& $\frac{2h^3x^2_j}{105}+\frac{h^5}{315}$\\ \hline
$\qq_j$ & $\pp_j$ & $0$   & $0$  & $0$ 
 & $\frac{h^3x_j}{15}$\\ \hline
$\pp_j$ & $\pp_{j\pm 1}$ & $\frac{9h}{70}$ & $\frac{-6h}{5}$ & $\frac{-12}{h^3}$ 
 & $\frac{23h^3}{630}-81hx_{j+1}+81x^2_{j+1}$\\ \hline
$\qq_j$ & $\qq_{j\pm 1}$& $\frac{-3h^3}{420}$&  $\frac{-h}{30}$ & $\frac{2}{h}$
&  $\frac{-h^5}{504}+\frac{h^4x_{j+1}}{140}-\frac{h^3x^2_{j+1}}{140}$ \\ \hline
$\pp_j$ & $\qq_{j+1}$ & $\frac{13h^2}{420}$ &  $\frac{1}{10}$  & $\frac{6}{h^2}$  
&  $\frac{19h^4}{2520}-72hx_{j+1}+78x^2_{j+1}$ \\ \hline
$\pp_j$ & $\qq_{j-1}$ & $\frac{-13h^2}{420}$& $\frac{-1}{10}$  & $\frac{-6}{h^2}$
& $\frac{-25h^4}{2520}-84hx_{j+1}+78x^2_{j+1}$ \\ \hline 
 \end{tabular}
\end{table}

\newpage

\begin{table}[H]
\hspace{-1.8cm}
\begin{tabular}{ ||cc||p{6cm}|p{8cm}||  }
\hline
   $b_j$ & $b_k$         &  $ \langle b_j,x^4b_k \rangle $   & $ \langle b_j,x^8b_k \rangle $  \\ \hline 
 $\qq_0$ & $\qq_0$  &$ \frac{h^3}{6930}[3h^4+66x^4_0+99hx^3_0+66h^2x^2_0+22h^3x_0]$ 
& 
$\frac{h^3}{45045}[429x^8_0+1287hx^7_0+2002h^2x^6_0+2002h^3x^5_0+1365h^4x^4_0+637h^5x^3_0+196h^6x^2_0+36h^7x_0+3h^8]$ \\ \hline
 $\qq_{n+1}$ & $\qq_{n+1}$&$ \frac{h^3}{6930}[3h^4+66x^4_n-99hx^3_n+66h^2x^2_n-22h^3x_n]$  
& $\frac{h^3}{45045}[429x^8_n-1287hx^7_n+2002h^2x^6_n-2002h^3x^5_n+1365h^4x^4_n-637h^5x^3_n+196h^6x^2_n-36h^7x_n+3h^8]$ \\ \hline
$\qq_0$&$\pp_1$ & $ \frac{h^2}{27720}[119h^4+858x^4_0+1848hx^3_0+1650h^2x^2_0+704h^3x_0]$ 
 &
$\frac{h^2}{180180}[5577x^8_0+24024hx^7_0+50050h^2x^6_0+64064h^3x^5_0+54145h^4x^4_0+30576h^5x^3_0+11172h^6x^2_0+2400h^7x_0+231h^8]$ 
\\ \hline 
$\qq_0$&$\qq_1$ &$\frac{-h^7}{1320}-\frac{h^3x^4_0}{140}-\frac{h^4x^3_0}{70}-\frac{h^5x^2_0}{84}-\frac{h^6x_0}{210}$ 
&
$\frac{-h^3}{180180}[1287x^8_0+5148hx^7_0+10010h^2x^6_0+12012h^3x^5_0+9555h^4x^4_0+5096h^5x^3_0+1764h^6x^2_0+360h^7x_0+33h^8]$
\\ \hline 
$\qq_{n+1}$ & $\pp_{n}$ &$ \frac{-h^2}{27720} [119h^4+858x^4_n+1848hx^3_n+1650h^2x^2_n+704h^3x_n]$ 
&
$\frac{-h^2}{180180}[5577x^8_n-24024hx^7_n+50050h^2x^6_n-64064h^3x^5_n+54145h^4x^4_n-30576h^5x^3_n+11172h^6x^2_n-2400h^7x_n+231h^8]$
\\ \hline 
$\qq_{n+1}$ & $\qq_{n}$ & $\frac{-h^7}{1320}-\frac{h^3x^4_n}{140}+\frac{h^4x^3_n}{70}-\frac{h^5x^2_n}{84}+\frac{h^6x_n}{210}$
&
$\frac{-h^3}{180180}[1287x^8_n-5148hx^7_n+10010h^2x^6_n-12012h^3x^5_n+9555h^4x^4_n-5096h^5x^3_n+1764h^6x^2_n-360h^7x_n+33h^8]$
 \\ \hline 
$\pp_j$ & $\pp_j$ &$\frac{h^5}{77}+\frac{38h^3x^2_j}{105}+\frac{26hx^4_j}{35}$ 
& 
$\frac{10h^5x^4_j}{11}+\frac{248h^7x^2_j}{2145}+\frac{26hx^8_j}{35}+\frac{76h^3x^6_j}{45}+\frac{74h^9}{45045}$
\\ \hline
$\qq_j$ & $\pp_j$  & $\frac{h^7}{1155}+\frac{2h^3x^4_j}{105}+\frac{2h^5x^2_j}{105}$ 
&
$\frac{2h^3x^8_j}{105}+\frac{56h^9x^2_j}{6435}+\frac{4h^5x^6_j}{45}+\frac{2h^7x^4_j}{33}+\frac{2h^{11}}{15015}$ 
\\ \hline
$\qq_j$ & $\pp_{j}$  &$\frac{8h^5x_j}{315}+\frac{2h^3x^3_j}{15}$ 
&
$\frac{256h^9x_j}{45045}+\frac{52h^7x^3_j}{495}+\frac{4h^3x^7_j}{15}+\frac{16h^5x^5_j}{45}$
\\ \hline
$\pp_j$ & $\pp_{j\pm 1}$ &$\frac{h}{4620}[69h^4-418h^3x_{j+1}+1012h^2x^2_{j+1}-1188hx^3_{j+1}+594x^4_{j+1}]$ 
& 
$\frac{h}{90090}[11583x^8_{j+1}-46332hx^7_{j+1}+92092h^2x^6_{j+1}-114114h^3x^5_{j+1}+94185h^4x^4_{j+1}-52234h^5x^3_{j+1}+18816h^6x^2_{j+1}-3996h^7x_{j+1}+381h^8]$
\\ \hline
$\qq_j$ & $\qq_{j\pm 1}$ & $\frac{-h^7}{1320}+\frac{h^6x_{j+1}}{210}-\frac{h^5x^2_{j+1}}{84}+\frac{h^4x^3_{j+1}}{70}-\frac{h^3x^4_{j+1}}{140}$ 
&
$\frac{-h^3x^8_{j+1}}{140}+\frac{h^4x^7_{j+1}}{35}-\frac{h^5x^6_{j+1}}{18}+\frac{h^6x^5_{j+1}}{15}-\frac{7h^7x^4_{j+1}}{130}+\frac{2h^{10}x_{j+1}}{1001}
-\frac{7h^9x^2_{j+1}}{715}+\frac{14h^8x^3_{j+1}}{495}-\frac{h^{11}}{5460}$
\\ \hline
$\pp_j$ & $\qq_{j+1}$ & $\frac{h^2}{27720}[75h^4-484h^3x_{j+1}+1254h^2x^2_{j+1}-1584hx^3_{j+1}+858x^4_{j+1}]$&
$\frac{-h^2}{180180}[5577x^8_{j+1}-24024hx^7_{j+1}+50050h^2x^6_{j+1}-64064h^3x^5_{j+1}+54145h^4x^4_{j+1}
-30576h^5x^3_{j+1}+11172h^6x^2_{j+1}-2400h^7x_{j+1}+231h^8]$
\\ \hline
$\pp_j$ & $\qq_{j-1}$ &$\frac{-h^2}{27720}[119h^4-704h^3x_{j+1}+1650h^2x^2_{j+1}-1848hx^3_{j+1}+858x^4_{j+1}]$ 
&
$\frac{h^2}{180180}[5577x^8_{j+1}-20592hx^7_{j+1}+
38038 h^2 x^{6}_{j+1} -44044h^3x^5_{j+1}+34125 h^4 x^4_{j+1}-17836 h^5 x^3_{j+1}+6076 h^6 x^2_{j+1}-1224h^7x_{j+1}+111h^8]$
\\ \hline 
 \end{tabular}
\end{table}

\begin{table}[H]
\center
\begin{tabular}{ ||cc||p{5cm}|p{6cm}||  }
  \hline
  $b_j$ &  $b_k$  &  $ \langle b{''}_j,x^2b_k \rangle $ &
 $ \langle b{''}_j,x^4b_k \rangle $  \\ \hline 
$ \qq_0$ & $\qq_0 $ & $\frac{-h}{105}[h^2+7hx_0+14x^2_0]$ &
 $\frac{hx_0}{105}[-14x^3_0-14hx^2_0-6h^2x_0-h^3]$ \\ \hline 
$ \qq_{n+1}$ & $\qq_{n+1} $ 
& $\frac{-h}{105}[h^2-7hx_n+14x^2_n]$
 &
 $\frac{hx_n}{105}[14x^3_n-14hx^2_n+6h^2x_n-h^3]$ \\ \hline
$ \qq_0$ & $\pp_1 $ 
& $\frac{x^2_0}{10}+\frac{2hx_0}{5}+\frac{23h^2}{105}$
&
 $\frac{17h^4}{84}+\frac{x_0^4}{10}+\frac{4hx^3_0}{5}+\frac{46h^2x^2_0}{35}+\frac{6h^3x_0}{7}$ \\ \hline
$ \pp_1$ & $\qq_0 $ 
&  $\frac{x^2_0}{10}-\frac{h^2}{70}$
&
 $\frac{-h^4}{84}+\frac{x^4_0}{10}-\frac{3h^2x^2_0}{35}-\frac{2h^3x_0}{35}$ \\ \hline
$ \qq_0$ & $\qq_1 $ 
& $\frac{h}{210}[7x^2_0-2h^2]$
&
 $\frac{h}{420}[-5h^4+14x^4_0-24h^2x^2_0-20h^3x_0]$ \\ \hline
$ \qq_1$ & $\qq_0 $ 
& $\frac{h}{210}[5h^2+14hx_0+7x^2_0]$
&
 $\frac{h}{420}[5h^4+28h^3x_0+60h^2x^2_0+56hx^3_0+14x^4_0]$ \\ \hline
$ \pp_{n}$ & $\qq_{n+1} $
&  $\frac{-x^2_n}{10}+\frac{h^2}{70}$
 &
$\frac{h^4}{84}-\frac{x^4_n}{10}+\frac{3h^2x^2_n}{35}-\frac{2h^3x_n}{35}$ \\ \hline 
$ \qq_{n+1}$ & $\pp_{n} $
&  $\frac{-x^2_n}{10}+\frac{2hx_n}{5}-\frac{23h^2}{105}$
 &
  $\frac{-17h^4}{84}-\frac{x^4_n}{10}+\frac{4hx^3_n}{5}-\frac{46h^2x^2_n}{35}+\frac{6h^3x_n}{7}$ \\ \hline 
$ \qq_{n}$ & $\qq_{n+1} $ 
& $\frac{h}{210}[5h^2-14hx_n+7x^2_n]$
&
 $\frac{h}{420}[5h^4-28h^3x_n+60h^2x^2_n-56hx^3_n+14x^4_n]$ \\ \hline
$ \qq_{n+1}$ & $\qq_{n} $  
&  $\frac{h}{210}[7x^2_n-2h^2]$
&
 $\frac{h}{420}[-5h^4+14x^4_n-24h^2x^2_n+20h^3x_n]$ \\ \hline
$ \pp_j$ & $\pp_j  $  
& $\frac{-2}{35h}[-h^2+42x^2_j]$ 
&
 $\frac{-4}{105h}[-2h^4-9h^2x^2_j+63x^4_j]$ \\ \hline
$ \qq_j$ & $\qq_j  $  
 & $\frac{-4}{15}[hx^2_j-\frac{2h^3}{105}]$
&
 $\frac{-4h^3x^2_j}{35}-\frac{4hx^4_j}{15}$ \\ \hline
$ \pp_j$ & $\qq_j  $  
& $0$
&
 $\frac{4h^3x^4_j}{35}$ \\ \hline
$ \qq_j$ & $\pp_{j}  $  
& $\frac{-4hx_j}{5}$
&
 $\frac{-4h^3x_j}{35}+\frac{8hx^3_j}{5}$ \\ \hline 
$ \pp_j$ & $\pp_{j+1}  $ 
& $\frac{1}{35h}[-h^2-7hx_{j+1}+42x^2_{j+1}]$
&
  $\frac{2}{105h}[-2h^4+9h^3x_{j+1}-9h^2x^2_{j+1}-21hx^3_{j+1}+63x^4_{j+1}]$ \\ \hline 
$ \pp_j$ & $\pp_{j-1}  $ 
& $\frac{1}{35h}[34h^2-77hx_{j+1}+42x^2_{j+1}]$
&
 $\frac{2}{105h}[40h^4-180h^3x_{j+1}+306h^2x^2_{j+1}-231hx^3_{j+1}+63x^4_{j+1}]$ \\ \hline 
$ \qq_j$ & $\qq_{j+1}  $ 
& $\frac{h}{210}[5h^2-14hx_{j+1}+7x^2_{j+1}]$ 
&
 $\frac{h}{420}[5h^4-28h^3x_{j+1}+60h^2x^2_{j+1}-56hx^3_{j+1}+14x^4_{j+1}]$ \\ \hline  
$ \qq_j$ & $\qq_{j-1}  $
& $\frac{h}{210}[-2h^2+7x^2_{j+1}]$
&
 $\frac{h}{420}[-5h^4+20h^3x_{j+1}-24h^2x^2_{j+1}+14x^4_{j+1}]$ \\ \hline 
$ \pp_j$ & $\qq_{j+1}  $ 
& $\frac{h^2}{70}-\frac{x^2_{j+1}}{10}$
&
 $\frac{-h^4}{84}-\frac{2h^3x_{j+1}}{35}+\frac{3h^2x^2_{j+1}}{35}-\frac{x^4_{j+1}}{10}$ \\ \hline 
$ \qq_j$ & $\pp_{j-1}  $ 
& $\frac{-23h^2}{105}+\frac{2hx_{j+1}}{5}-\frac{x^2_{j+1}}{10}$
& $\frac{-17h^4}{84}+\frac{6h^3x_{j+1}}{7}-\frac{46h^2x^2_{j+1}}{35}+\frac{4hx^3_{j+1}}{5}-\frac{x^4_{j+1}}{10}$ \\ \hline
$ \qq_j$ & $\pp_{j+1}  $ 
 & $\frac{-17h^2}{210}+\frac{hx_{j+1}}{5}+\frac{x^2_{j+1}}{10}$
& $\frac{-17h^4}{420}+\frac{8h^3x_{j+1}}{35}-\frac{17h^2x^2_{j+1}}{35}+\frac{2hx^3_{j+1}}{5}+\frac{x^4_{j+1}}{10}$ \\ \hline
$ \pp_j$ & $\qq_{j-1}  $ 
& $\frac{3h^2}{35}-\frac{hx_{j+1}}{5}+\frac{x^2_{j+1}}{10}$
&  $\frac{-5h^4}{84}-\frac{2h^3x_{j+1}}{7}+\frac{18h^2x^2_{j+1}}{35}-\frac{2hx^3_{j+1}}{5}+\frac{x^4_{j+1}}{10}$ \\ \hline
\end{tabular}
\end{table}

\section*{Acknowledgements}
Financial support was provided by the Engineering and Physical Sciences Research Council (grant number EP/I00761X/1) and King Abdulaziz University. We kindly thank M~Alsaeed and N~Boussa{\"\i}d  for their suggestions during the preparation of this paper.

\bibliographystyle{siam}

\vspace{1cm}

\begin{minipage}{4cm} 
Lyonell Boulton$^1$ \\ \texttt{L.Boulton@hw.ac.uk}
\end{minipage}
\qquad \& \quad \qquad \begin{minipage}{4cm}
 Aatef Hobiny$^{1,2}$ \\ \texttt{ahobany@kau.edu.sa}
\end{minipage}\\ \\
\noindent \begin{minipage}{12cm} 

\medskip

\centerline{$^1$Department of Mathematics and 
Maxwell Institute for Mathematical Sciences}
\centerline{Heriot-Watt University, Edinburgh, EH14 4AS, UK}

\medskip

\centerline{$^2$ Department of Mathematics, Faculty of Science, King Abdulaziz University }
\centerline{P.O. Box. 80203, Jeddah 21589, Saudi Arabia}
\end{minipage}

 \end{document}